\def\MR#1{}
\theoremstyle{plain}
\newtheorem{theorem}{Theorem}[section]
\newtheorem{lemma}[theorem]{Lemma}
\newtheorem{proposition}[theorem]{Proposition}
\theoremstyle{definition}
\theoremstyle{remark}
\newtheorem{remark}[theorem]{Remark}
\numberwithin{equation}{section}
\newcommand{\bN}{\mathbb{N}}
\newcommand{\bZ}{\mathbb{Z}}
\newcommand\cX{\mathcal{X}}
\def\dashint{\operatorname%
{\,\,\text{\bf--}\kern-.98em\DOTSI\intop\ilimits@\!\!}}
\begin{document}
\title[Small scales]{Small scales in inviscid limits of steady fluids}
\author[Y. Guo]{Yan Guo}

\author[Z. Yang]{Zhuolun Yang}

\address[Y. Guo]{Division of Applied Mathematics, Brown University, 182 George Street, Providence, RI 02912, USA}
\email{yan\_guo@brown.edu}

\address[Z. Yang]{Division of Applied Mathematics, Brown University, 182 George Street, Providence, RI 02912, USA}
\email{zhuolun\_yang@brown.edu}

\thanks{Y. Guo was partially supported by NSF Grant DMS-2405051}
\thanks{Z. Yang was partially supported by the AMS-Simons Travel Grant}

\subjclass[2020]{35Q30, 76D10}

\keywords{Navier-Stokes equations, inviscid limits, small scales, boundary layers}
\begin{abstract}
In this article, we study the 2D incompressible steady Navier-Stokes equation in a channel $(-L,0)\times(-1,1)$ with the no-slip boundary condition on $\{Y = \pm 1\}$, and consider the inviscid limit $\varepsilon \to 0$. In the special case of Euler shear flow $(u_e(Y),0)$, we construct a steady Navier-Stokes solution for $\varepsilon \ll1$,
$$\left\{
\begin{aligned}
&u^\varepsilon \sim u_e + u_p + O(\sqrt{\varepsilon}),\\
&v^\varepsilon \sim  h(Y)  \exp\{Xu_e(Y)/\varepsilon\} + O(\sqrt{\varepsilon}),
\end{aligned}\right.
$$
where $u_p$ represents the classical Prandtl layer profile, and $h(Y)$ is an arbitrary smooth, compactly-supported function with small magnitude. While the classical Prandtl boundary layer $u_p$ exhibits a small scale of order $\sqrt{\varepsilon}$ in $Y$ near $Y = \pm 1$, the profile we construct reveals an $\varepsilon$ small scale of $Xu_e(Y)$ in the vertical velocity component.
\end{abstract}

\maketitle

\section{Introduction}
We study the incompressible steady Navier-Stokes equation in a two dimensional channel $\Omega = (-L,0)\times(-1,1)$
\begin{equation}\label{NS}
\left\{
\begin{aligned}
u^\varepsilon u^\varepsilon_X + v^\varepsilon u^\varepsilon_Y + p_X^\varepsilon - \varepsilon \Delta u^\varepsilon &= 0,\\
u^\varepsilon v^\varepsilon_X + v^\varepsilon v^\varepsilon_Y + p_Y^\varepsilon - \varepsilon \Delta v^\varepsilon &= 0,\\
u^\varepsilon_X + v^\varepsilon_Y &= 0,
\end{aligned}
\right.
\end{equation}
with the no-slip boundary condition on $\{Y = \pm 1\}$:
$$u^\varepsilon(X,1) = u^\varepsilon(X,-1) = v^\varepsilon(X,1) = v^\varepsilon(X,-1) = 0.$$
The boundary condition at $X=-L$ and $X=0$ are considered as inflow and outflow conditions, which will be prescribed later in the article.

We would like to study the asymptotic behavior of the solutions $(u^\varepsilon, v^\varepsilon)$ of \eqref{NS} as the viscosity $\varepsilon \to 0$. Formally, as $\varepsilon \to 0$, the equation \eqref{NS} converges to the steady Euler equation
\begin{equation}\label{Euler}
\left\{
\begin{aligned}
u^0_e u^0_{eX} + v^0_e u^0_{eY} + p_{eX}^0 &= 0,\\
u^0_e v^0_{eX} + v^0_e v^0_{eY} + p_{eY}^0 &= 0,\\
u^0_{eX} + v^0_{eY} &= 0,
\end{aligned}
\right.
\end{equation}
with no penetration boundary condition on $\{Y = \pm 1\}$:
$$
v^0_e(X,-1) = v^0_e (X,1) = 0.
$$
Due to the mismatch of the boundary value $u^\varepsilon$ and $u^0_e$ on $\{Y = \pm 1\}$, one does not expect the uniform convergence of $u^\varepsilon$ to $u^0_e$. In 1904, Prandtl proposed a scheme that the solution of \eqref{NS} can be expressed as the sum of \eqref{Euler} and a boundary layer corrector, which is a solution of the Prandtl equation. For example, near the bottom boundary $\{Y = -1\}$, Prandtl suggested that
\begin{equation}\label{Prandtl ansatz}
\begin{aligned}
u^\varepsilon(X,Y) &= u^0_e(X,Y) + u^0_p (x,y) + O(\sqrt{\varepsilon}),\\
v^\varepsilon(X,Y) &= v^0_e(X,Y) + \sqrt{\varepsilon} v^0_p(x,y) + O(\sqrt{\varepsilon}),
\end{aligned}
\end{equation}
where $(x,y)$ are the scaled variables
$$
x = X, \quad y = \frac{Y+1}{\sqrt{\varepsilon}},
$$
and $(u^0_p, v^0_p)$ is a solution to the Prandtl equation
\begin{equation}\label{Prandtl}
\left\{
\begin{aligned}
\Big(u^0_e(X,-1)+u^0_p\Big) u^0_{px} + \Big(v^0_p - v^0_p(x,0)\Big) u^0_{py} + p_{px}^0 - u^0_{pyy} &= 0,\\
p_{py}^0 &= 0,\\
u^0_{px} + v^0_{py} &= 0,
\end{aligned}
\right.
\end{equation}
with the boundary condition $u^0_{p}|_{y = 0} = - u^0_e|_{Y = -1}$.

Although the ansatz \eqref{Prandtl ansatz} was proposed over a century ago, its validity has only been rigorously justified very recently. This was first achieved by Guo and Nguyen in \cite{GuoNguyen17}, under the assumption of a moving boundary for small $x$ when the background Euler flow is a shear flow. This result has since been generalized in several directions. Iyer, in \cite{Iyer17}, studied flows over a rotating disk, established global validity in $x$ in \cites{Iyer1,Iyer2,Iyer3}, and considered non-shear background Euler flows in \cite{Iyer19}. All of these results assume that solutions of the Navier-Stokes equations satisfy a slip boundary condition. 

The validity of \eqref{Prandtl ansatz} under the classical no-slip boundary condition was recently established by Guo and Iyer \cite{GuoIyer23} for shear Euler flow in $x \in (0,L)$ for $L$ sufficiently small. This result covers an important class of Prandtl layer, the Blasius self-similar solutions. Gao and Zhang \cite{GaoZhang23} generalized this result in two directions: (1) the background Euler can be non-shear, and (2) $L$ can be large under the assumption that the Prandtl layer is concave. Meanwhile, shear Prandtl layer was consider by Gerard-Varet and Maekawa \cite{GVM19} in a narrow periodic in $x$ domain. Finally, Iyer and Masmoudi \cite{IyerMasmoudi21a} established the global in $x$ validity of \eqref{Prandtl ansatz}.

The Prandtl layer is known to exhibit a small scale in $Y$ variable near the boundaries $\{Y = \pm 1\}$, while it does not exhibit any small scale in $X$ variable. The main objective of this article is to demonstrate the existence of solutions to \eqref{NS} that are close to the ansatz \eqref{Prandtl ansatz} in the $L^\infty$ topology, and also exhibit small-scale behavior in $X$ variable near the right side boundary $\{X=0\}$. We will construct the solution in the following form:
\begin{equation}\label{expansion}
\left\{
\begin{aligned}
u^\varepsilon &= u_e^0 + u_p + u_r + \varepsilon^{13/2} u,\\
v^\varepsilon &= v_e^0 + v_p + v_r + \varepsilon^{13/2} v.
\end{aligned}
\right.
\end{equation}
Here in \eqref{expansion}, $(u_e^0,v_e^0)$ represents the background Euler flow; $(u_p,v_p)$ consists of the classical alternating Prandtl-Euler correctors; $(u_r, v_r)$  is the profile that reveals an $\varepsilon$ small scale near the right outlet: the leading order of $(u_r, v_r)$ behaves like 
$$\left(\varepsilon h'(Y)\exp\Big\{X u_e^0 (0, Y)/\varepsilon \Big\},~ h(Y)\exp\Big\{ X u_e^0 (0, Y)/\varepsilon\Big\}\right)$$
 near $X=0$, where $h(Y)$ is a smooth compact support function in $Y \in (-1,1)$. Detailed constructions of $(u_p,v_p)$ and $(u_r, v_r)$ will be provided in Section \ref{sec_app}. Finally, $(u,v)$ represents the remainder. It is important to emphasize that the magnitude of $v_r$ is of order 1, while the magnitude of $v_p$ is of order $\sqrt{\varepsilon}$. Therefore, $v_r$ dominates in the vertical velocity.  We would like to point out that a similar profile to $(u_r, v_r)$ was obtained by Temam and Wang \cite{TemamWang} where they studied the boundary layer induced by suction. While they imposed periodic boundary conditions in the other variable, resulting in the absence of a Prandtl boundary layer, our result demonstrates that the layer profile $(u_r, v_r)$ and the Prandtl boundary layer can coexist.

In this article, we assume that our background Euler flow $(u_e^0, v_e^0)$ satisfies the following assumptions:
\begin{align}
&0 < c_0 \le u_e^0 \le C_0 < \infty,\label{assumption1}\\
&\left\|\frac{v_e^0}{\widetilde{Y}}\right\|_\infty \ll1,\label{assumption2}\\
&\| \nabla^m u_e^0 \|_\infty < \infty \quad \mbox{for}~ m \ge 0, \label{assumption3}\\
&\| \nabla^m v_e^0 \|_\infty < \infty \quad \mbox{for}~ m \ge 0, \label{assumption4} 
\end{align}
where $\widetilde{Y}:= (1-Y)(1+Y)$. The existence of such an Euler flow can be obtained through a perturbation from a shear flow (see \cite{Iyer19}*{Proposition 24}, for example).

Now we state our main theorems. The space $\cX$ we use to control the remainder is defined in \eqref{def_spaceX}. The first result is the existence of solution for small $L$.
\begin{theorem}\label{Thm1}
Assume that the background Euler flow $(u_e^0, v_e^0)$ satisfies \eqref{assumption1}-\eqref{assumption4} and $L\ll 1$, let $(u_p,v_p)$ be the alternating Prandtl-Euler corrector as in \eqref{Prandtl corrector}, $h(Y) \in C_c^\infty(-1,1)$ satisfying $\|h\|_\infty \ll 1$,  and $(u_r,v_r)$ be the profile constructed in \eqref{def_u_r v_r}. Then for any $0 < \varepsilon \ll L$, the equation \eqref{NS} admits a solution $(u^\varepsilon, v^\varepsilon)$ in the form of \eqref{expansion}, with the boundary conditions matching the approximated profile:
\begin{align*}
&(u^\varepsilon, v^\varepsilon)|_{Y=\pm 1} = 0,\\
&(u^\varepsilon, v^\varepsilon)|_{X=-L} = (u_e^0 + u_p, v_e^0 + v_p)|_{X=-L},\\
&(u^\varepsilon, v^\varepsilon)|_{X=0} = (u_e^0 + u_p + \varepsilon h(Y)/u_e^0 + O(\varepsilon^{3/2}) , v_e + v_p + h(Y) + O(\varepsilon^{1/2}))|_{X=0},
\end{align*}
and the remainder $(u,v)\in \cX$ with the estimate
$$
\|\{u,v\}\|_{\cX} \le 1.
$$
\end{theorem}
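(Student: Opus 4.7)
The plan is to construct the remainder $(u,v)$ as the unique fixed point of a contraction mapping on the unit ball of $\cX$. Inserting the ansatz \eqref{expansion} into \eqref{NS} and subtracting off the equations satisfied separately by $(u_e^0,v_e^0)$, by the Prandtl--Euler correctors $(u_p,v_p)$, and by the outlet profile $(u_r,v_r)$, the rescaled remainder must solve a perturbed Oseen-type system of the form
\begin{equation*}
L(u,v) + \varepsilon^{13/2} N(u,v) = \varepsilon^{-13/2} R,
\end{equation*}
where $L$ is the linearization about $a := u_e^0 + u_p + u_r$ and $b := v_e^0 + v_p + v_r$, $N$ is the quadratic self-interaction of the remainder, and $R$ collects all residuals generated by plugging the approximate profiles into \eqref{NS}. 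The boundary data stipulated in the statement reduce, by construction of $(u_p,v_p)$ and $(u_r,v_r)$, to homogeneous Dirichlet conditions for $(u,v)$ on all four sides of $\Omega$.

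My first step would be to verify that $\varepsilon^{-13/2} R$ is small in the norm dual to $\cX$. The self-errors of $(u_e^0,v_e^0)$, $(u_p,v_p)$, and $(u_r,v_r)$ are by construction of order $\varepsilon^{N}$ with $N > 13/2$, since all three profiles are built to solve their respective equations up to high order. The interesting contributions come from the \emph{cross-interactions}: the Prandtl layer carries an $O(\sqrt{\varepsilon})$ scale in $Y$ near $\{Y = \pm 1\}$ while $(u_r,v_r)$ carries an $O(\varepsilon)$ scale in $X$ near $\{X=0\}$, so these profiles overlap only in thin corners. Their interaction is controlled using the exponential factor $\exp\{X u_e^0(0,Y)/\varepsilon\}$, which has $L^2$-mass in $X$ of order $\sqrt{\varepsilon}$ and absorbs the naively divergent $1/\varepsilon$ losses produced by $X$-derivatives of $v_r$; the hypotheses $\|h\|_\infty \ll 1$ and $L\ll 1$ give additional slack.

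The technical heart of the proof is the linear theory for $L$ in $\cX$: given an admissible source $f$, solve $L(u,v)=f$ with homogeneous Dirichlet data and obtain $\|(u,v)\|_\cX \lesssim \|f\|$. This extends the positivity-based energy method introduced in \cite{GuoNguyen17} and adapted to the no-slip setting in \cite{GuoIyer23}. What is new here is the presence of $v_r$ of unit amplitude, concentrated on an $\varepsilon$-scale near the outlet: the zeroth-order terms $u \partial_X b$ and $v \partial_Y b$ in $L$ are not pointwise small, and absorbing them requires test-function multipliers weighted by powers of $\widetilde Y$ combined with the smallness of $\|h\|_\infty$. The exponential concentration of $v_r$ ensures that the relevant weighted $L^2$-norms of its derivatives are in fact finite and small, so the perturbation they induce in the coercivity estimate of \cite{GuoIyer23} can be absorbed on the left-hand side.

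Once the linear estimate is in place, the closing argument is routine. The nonlinear term contributes $O(\varepsilon^{13/2}\|(u,v)\|_\cX^2)$ and the forcing contributes $O(\varepsilon^\delta)$ for some $\delta>0$, so for $\varepsilon\ll L$ the map $(u,v)\mapsto L^{-1}\bigl(\varepsilon^{-13/2} R - \varepsilon^{13/2} N(u,v)\bigr)$ maps the unit ball of $\cX$ into itself and is a contraction there. The unique fixed point yields the desired remainder. The main obstacle is the linear estimate: prior Prandtl-expansion work only had to contend with background coefficients having a $\sqrt{\varepsilon}$-scale in $Y$, whereas the present setting forces one to also absorb a unit-size coefficient concentrated on an $\varepsilon$-scale in $X$, and this genuinely requires the exponentially weighted multipliers built into the definition of $\cX$.
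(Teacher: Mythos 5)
Your overall plan coincides with the paper's: build the approximate profile, prove a linear coercivity estimate in $\cX$ for the linearization around it, and close by a contraction mapping on the unit ball, using that the residual $R$ of the approximate profile is $O(\varepsilon^7)$ and the nonlinearity carries a prefactor $\varepsilon^{13/2}$. That matches Section 4 and Proposition 4.1.

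Where your sketch goes astray is in describing the mechanism behind the linear estimate, which you correctly flag as the technical heart. You attribute the control of the unit-size, $\varepsilon$-concentrated coefficient $v_r$ (and the very singular $\Delta v_r \sim \varepsilon^{-2}$) to ``test-function multipliers weighted by powers of $\widetilde Y$'' and to ``exponentially weighted multipliers built into the definition of $\cX$.'' Neither is present. The norm on $\cX$ carries no exponential weights --- it is built from $\nabla\phi$, $\sqrt{\varepsilon}\nabla^2\phi$, $\sqrt{\varepsilon}\sqrt{u_a}\nabla^2 q$, and $\varepsilon^3\nabla^3\phi$ --- and the decisive multiplier is $qX$, not an exponential. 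The point, made explicit after Lemma~\ref{lem_1}, is that the weight $X$ cancels one $X$-derivative of $(u_r,v_r)$: since $|X^j\partial_X^{i+j} v_r| \lesssim \varepsilon^{-i}\|h\|_\infty$ (inequality \eqref{u_r_property}), one has $X\Delta v_r \sim \varepsilon^{-1}$ rather than $\varepsilon^{-2}$, and the residual $\varepsilon^{-1}$ and unit-size pieces are then absorbed using $\|h\|_\infty \ll 1$, $L\ll 1$, the Hardy inequality $\|\sqrt{u_a}\,G/X\|_2 \lesssim \|\sqrt{u_a}\,G_X\|_2$, and the $\sigma$-interpolation \eqref{Hardy_1}. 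Separately, your explanation of why the Prandtl--$v_r$ cross-interactions in $R$ are harmless invokes the exponential factor $\exp\{Xu_e^0/\varepsilon\}$; in the paper the relevant observation is instead that $h$ (hence $u_r,v_r$) is compactly supported in $Y$ strictly inside $(-1,1)$, while the Prandtl correctors decay exponentially away from $Y=\pm1$, so the products are exponentially small in $\varepsilon$ irrespective of the $X$-behavior. These are not cosmetic differences: if you actually tried to run the energy estimate with $\widetilde Y$- or exponentially-weighted multipliers you would not get the cancellation of the $\varepsilon^{-2}$ singularity, so the coercivity bound would fail to close.
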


For the second result, we restrict the background Euler flow to be a shear, and make an additional concavity assumption on the Prandtl profile constructed, we can obtain the same existence result for large $L$.
\begin{theorem}\label{Thm2}
Assume that the background Euler flow is a shear flow $(u_e^0(Y), 0)$ that satisfies \eqref{assumption1} and \eqref{assumption3}, let $L > 0$, $(u_p,v_p)$ be the alternating Prandtl-Euler corrector as in \eqref{Prandtl corrector} with
\begin{equation}\label{concave}
- u^0_{pYY} \ge 0,
\end{equation}
$h(Y) \in C_c^\infty(-1,1)$ satisfying $\|h\|_\infty \ll 1$,  and $(u_r,v_r)$ be the profile constructed in \eqref{def_u_r v_r}. Then for any $0 < \varepsilon \ll L$, the equation \eqref{NS} admits a solution $(u^\varepsilon, v^\varepsilon)$ in the form of \eqref{expansion}, with the boundary conditions matching the approximated profile:
\begin{align*}
&(u^\varepsilon, v^\varepsilon)|_{Y=\pm 1} = 0,\\
&(u^\varepsilon, v^\varepsilon)|_{X=-L} = (u_e^0 + u_p, v_e^0 + v_p)|_{X=-L},\\
&(u^\varepsilon, v^\varepsilon)|_{X=0} = (u_e^0 + u_p + \varepsilon h(Y)/u_e^0 + O(\varepsilon^{3/2}) , v_e + v_p + h(Y) + O(\varepsilon^{1/2}))|_{X=0},
\end{align*}
and the remainder $(u,v)\in \cX$ with the estimate
$$
\|\{u,v\}\|_{\cX} \le 1.
$$
\end{theorem}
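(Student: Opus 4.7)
Set $u_{app} = u_e^0 + u_p + u_r$ and $v_{app} = v_p + v_r$ (recall $v_e^0\equiv 0$ for the shear case), and insert the ansatz \eqref{expansion} into \eqref{NS}. After cancelling the Euler, Prandtl, and $r$-profile equations, the pair $(u,v)$ satisfies a linearized Navier-Stokes system of the schematic form
\begin{equation*}
\cL(u,v) := u_{app}\,u_X + v\,u_{app,Y} + v_{app}\,u_Y + u_{app,X}\,u + \nabla p -\varepsilon\Delta(u,v) = \cF + \varepsilon^{13/2}\cN(u,v),
\end{equation*}
together with an incompressibility constraint, homogeneous no-slip data on $\{Y=\pm 1\}$, and homogeneous inflow/outflow data at $X=-L,0$ (since all non-zero Dirichlet data are absorbed into $u_{app},v_{app}$). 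Here $\cF$ is the forcing produced by the approximation errors: one verifies from Section~\ref{sec_app} and the exponential decay of $(u_r,v_r)$ in $(-Xu_e^0/\varepsilon)$ that $\cF = O(\varepsilon^{1/2})$ in whatever weighted norm underlies $\cX$. My plan is to prove a linear estimate $\|(u,v)\|_{\cX}\lesssim \|\cF\|_*$ for $\cL$ (uniformly in $\varepsilon\ll L$), then close a contraction mapping on the ball $\{\|(u,v)\|_{\cX}\le 1\}$, since $\varepsilon^{13/2}\cN$ is a higher-order perturbation.

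The core of the argument is the linear estimate. Because the background Euler is shear and $-u_{pYY}^0\ge 0$, I would proceed via the vorticity/stream function formulation $\omega = v_X-u_Y$, $u=\psi_Y$, $v=-\psi_X$, and follow the \emph{positivity multiplier} strategy introduced by Guo--Iyer \cite{GuoIyer23} and extended by Gao--Zhang and Iyer--Masmoudi. One tests the vorticity equation against $v/u_s$ (where $u_s:=u_e^0+u_p$), which turns the convective term $u_s\partial_X\omega$ into $\tfrac12\partial_X(u_s\omega^2/\cdots)$ plus the good sign produced by $-u_{s,YY}\ge 0$; the singular contribution from the corner $Y=\pm 1$ is handled by a Hardy-type inequality against the weight $\widetilde Y$ built into \eqref{assumption2} and into the definition of $\cX$. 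The $\varepsilon\Delta$ term supplies the base $H^1$-coercivity, and higher-order analogues (multipliers like $(\partial_X v)/u_s$, $(\partial_Y v)/u_s$, and one more $Y$-derivative) provide the $\cX$-norm. Because $L$ is large, the crucial point is that the positivity multiplier gives an $X$-pointwise bound, not merely an integrated one, so no exponential amplification in $L$ appears.

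The main obstacle is accommodating the new profile $(u_r,v_r)$ inside this machinery. Two features of $(u_r,v_r)$ are delicate: the coefficient $v_{app,Y}$ in $\cL$ now contains $v_{r,Y}$, which is of order $1$ (not $\sqrt\varepsilon$ like $v_p$), and the drift $v_{app}\partial_Y$ carries $v_r$ which is also $O(1)$. Both are however concentrated in the boundary strip $\{-\varepsilon\log(1/\varepsilon)\le X\le 0\}$ because of the factor $\exp\{Xu_e^0(0,Y)/\varepsilon\}$, so I plan to split the estimate at $X=-\delta$ for $\delta=C\varepsilon|\log\varepsilon|$: on $(-L,-\delta)$ the $r$-profile contributions are exponentially small and the classical shear/concavity estimate of Guo--Iyer applies verbatim; on $(-\delta,0)$ I exploit the thinness of the strip (its measure is $O(\varepsilon|\log\varepsilon|)$) together with the divergence structure $u_{r,X}+v_{r,Y}=0$ to integrate by parts and absorb the $O(1)$ factors into $\varepsilon$-small prefactors, much as one controls commutators in Prandtl theory. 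Matching the two pieces across $X=-\delta$ requires a cutoff estimate, and this is where one must be most careful, since a clumsy cutoff produces a commutator with the $\varepsilon^{-1}$ factor in $v_r$ that would destroy the $\varepsilon^{13/2}$ budget; the right choice is a cutoff commensurate with $\delta=C\varepsilon|\log\varepsilon|$, whose derivative contributes only $|\log\varepsilon|^{-1}$ times the $r$-profile weight, which is affordable. Once the linear estimate is in hand, the contraction mapping and hence Theorem~\ref{Thm2} follow in the standard way.
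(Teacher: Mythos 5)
Your overall scaffolding---approximate profile, linearized stability estimate in $\cX$, then contraction mapping with the $\varepsilon^{13/2}$-small nonlinearity---matches the paper's strategy (Proposition~\ref{prop_estimate} followed by the fixed-point argument in Section~\ref{sec_proof}). The genuine difference, and the place where your plan has a gap, is the linear estimate itself.

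The paper does \emph{not} split the domain at $X=-\delta$. Instead it tests the quotient equation \eqref{linear_quotient} against $qX$ globally (Lemma~\ref{lem3}), and the whole point is that the weight $X$ itself tames the singular coefficient: the truly dangerous term is not $v_r$ or $v_{rY}$ (which are $O(1)$, as you note) but $\Delta v_r\sim \varepsilon^{-2}e^{Xu_e^0/\varepsilon}$ entering through $-\Delta v_s\,\phi_Y$ in \eqref{remainder_linear}. Against $qX$, one has $\|Xv_{rXX}\|_\infty\lesssim\varepsilon^{-1}\|h\|_\infty$ (see \eqref{u_r_property}), and then the Hardy inequality \eqref{Hardy_2} in $X$ absorbs the remaining factor of $\varepsilon^{-1}$ into the $\varepsilon\|\sqrt{u_a}\nabla^2 q\|_2^2$ coercivity. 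Your proposal never addresses this $\varepsilon^{-2}$ coefficient. The thin-strip bookkeeping ``measure of $(-\delta,0)$ is $O(\varepsilon|\log\varepsilon|)$'' buys you one power of $\varepsilon$ (and a log), which is not enough to beat $\varepsilon^{-2}$; nor does the divergence relation $u_{rX}+v_{rY}=0$ obviously remove the third $X$-derivative $v_{rXX}$ that appears after the integrations by parts. Without a specific mechanism for $\Delta v_r$, the left piece $(-\delta,0)$ cannot be closed.

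A second issue: on $(-L,-\delta)$ you say the ``classical shear/concavity estimate of Guo--Iyer applies verbatim,'' but \cite{GuoIyer23} is a \emph{small-}$L$ result. For arbitrary $L$ with concave $u_p^0$ the relevant estimate is the Gao--Zhang / Iyer--Masmoudi one, which is exactly what Lemma~\ref{lem3} reproduces: testing against $qX$ makes the Rayleigh terms $(u_a^2q_X,q_X)$, $(u_a^2q_Y,q_Y)$ nonnegative with $O(1)$ coefficients, the $\varepsilon\Delta^2$ terms contribute $-\varepsilon(u_a\nabla^2q,\nabla^2q\,X)\ge 0$, and the remaining first-order coefficients assemble, via the first-Prandtl-equation identity \eqref{Prandtl_2} and the concavity \eqref{concave}, into quantities of the sign of $-\varepsilon u^0_{pYY}X\ge 0$ plus $O(\sqrt\varepsilon)$ errors. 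There is no pointwise-in-$X$ bound anywhere in the paper's argument; it is a purely integrated, weighted energy estimate, and it needs no cutoff and hence no commutator against the exponentially concentrated $(u_r,v_r)$.

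So: the high-level plan is right, but the linear estimate you sketch would not close as stated. The fix is to drop the $\delta$-decomposition and instead (i) work with $q=\phi/u_a$, (ii) test against $qX$ on the whole domain, (iii) use $\|X^j\partial_X^{i+j}v_r\|_\infty\lesssim\varepsilon^{-i}\|h\|_\infty$ together with the Hardy inequality \eqref{Hardy_2} to handle the $\Delta v_r$ term, and (iv) invoke the concavity/Prandtl cancellation \eqref{concave}--\eqref{Prandtl_2} to get the large-$L$ coercivity, exactly as in Lemma~\ref{lem3}.
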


\begin{remark}
It is important to note that, unlike the small scale in the $Y$ variable found in the classical Prandtl boundary layer, where the small scale needs to be introduced through the boundary conditions at $X = -L$, the small scale from $(u_r,v_r)$ cannot be seen on the boundary. In this sense, the small scale from $(u_r,v_r)$ is more spontaneous.
\end{remark}

\begin{remark} The assumption \eqref{concave} is natural in the sense that an important class of self-similar solution to \eqref{Prandtl}, the Blasius solutions, satisfy \eqref{concave}. It is also known that a general class of solution to \eqref{Prandtl} converges to the Blasius solutions as $x \to \infty$ (see, e.g., \cites{Serrin67,Iyer20}).
\end{remark}

Proofs of these two theorems consist of two steps. We first construct an approximated profile
$$
(u_s, v_s) := (u_a + u_r, v_a + v_r):=(u_e^0 + u_p + u_r, v_e^0 + v_p + v_r).
$$
For convenience, we often work with the stream function $\phi(X,Y)$, where
$$
u = \phi_{Y}, \quad v = -\phi_{X}.
$$
Similarly, we use $\phi_s, \phi_a, \phi_e, \phi_p, \phi_r,$ to denote the stream function of $u_s, u_a, u_e^0, u_p, u_r$, respectively. 
A crucial quantity, the ``quotient", is defined as
$$
q := \frac{\phi}{u_a}.
$$
This quantity plays an important role in establishing the validity of steady Prandtl layer expansions in \cite{GuoIyer23}.

After constructing the approximated profile, the remainder $\phi$ satisfies
\begin{equation}\label{remainder_eq}
u_s \Delta \phi_X - \Delta u_s \phi_X + v_s \Delta \phi_Y - \Delta v_s \phi_Y - \varepsilon \Delta^2 \phi = -\varepsilon^{-\frac{13}{2}} R - \varepsilon^{\frac{13}{2}} (\phi_Y \Delta \phi_X - \phi_X \Delta \phi_Y),
\end{equation}
where 
\begin{equation}\label{remainder_R}
R = u_s \Delta \phi_{sX} + v_s \Delta \phi_{sY} - \varepsilon \Delta^2 \phi_s.
\end{equation}
The space $\cX$ is defined via
\begin{equation}\label{def_spaceX}
\begin{aligned}
\cX := \{ \phi \in H^3(\Omega): \phi = \partial_\nu \phi = 0~\mbox{on}~\partial \Omega, \| \phi\|_{\cX} < \infty\},
\end{aligned}
\end{equation}
where
$$
\|\phi\|_{\cX} := \| \nabla \phi\|_2 + \varepsilon^{1/2}\|\nabla^2 \phi\|_2 + \varepsilon^{1/2}\|\sqrt{u_a} \nabla^2 q\|_2 + \varepsilon^3 \| \nabla^3 \phi\|_2 < \infty.
$$

Since the right-hand side of \eqref{remainder_eq} is small, the second step is to establish the linear stability of approximated profile $\phi_s$. We aim to derive an estimate for the remainder $\phi$ in the space $\cX$, which contains an $\varepsilon$-independent $H^1$ estimate. This estimate arises from the term $u_s \Delta \phi_X$, whose coefficient $u_s = 0$ at $Y = \pm 1$. On the other hand, the profile $v_r$ introduces a highly singular coefficient:
$$\Delta v_r \sim \frac{1}{\varepsilon^2} h(Y) e^{Xu_e^0(0,Y)/\varepsilon}.$$
Therefore, equation \eqref{remainder_eq} can be interpreted as a mixed degenerate-singular elliptic equation.

To obtain the desired estimate for $\phi$, we follow a similar approach as in \cite{GaoZhang23}. Due to the presence of the singular coefficient $\Delta v_r$, we utilize a different Hardy-type inequality \eqref{Hardy_2}. To close the estimate, we make use of the weight $X$,which has been effectively used in \cites{Iyer19, IyerMasmoudi21a, GaoZhang23}. It is important to note that the weight $X$ interacts favorably with the profile $v_r$, as $X\Delta v_r \sim \varepsilon^{-1}$ reduces the singularity by a factor of $\varepsilon$.

The rest of this article is planned as follows. In Section \ref{sec_app}, we provide the construction of the approximated profile $\phi_s$. Estimates for \eqref{remainder_eq} will be given in Section \ref{sec_est}. Finally in Section \ref{sec_proof}, we prove Theorems \ref{Thm1} and \ref{Thm2}.

\section{Construction of the approximated profiles}\label{sec_app}

In this section, we outline the construction of the approximated profile $\phi_p$ and $\phi_r$, given a background Euler flow $\phi_e$. The profile $\phi_p$ is composed of alternating Prandtl and Euler correctors, a well-established approach. Therefore, we will only provide an overview of the strategy for constructing $\phi_p$ and the corresponding estimates. For a detailed construction of $\phi_p$,  readers are referred to \cites{Oleinik, GuoIyer21, GuoIyer23, IyerMasmoudi21b}. The main novelty is the construction of $\phi_r$, given the profile $\phi_e + \phi_p$.

\subsection{Construction of $\phi_p$}
We start with the asymptotic expansion:
\begin{equation}\label{Prandtl corrector}
\begin{aligned}
u_p &= u_{p}^0 + \sum_{i=1}^{15} \sqrt{\varepsilon}^i(u_e^i + u_p^i),\\
v_p &= \sum_{i=1}^{15} \sqrt{\varepsilon}^i( v_p^{i-1}+v_e^i) + \sqrt{\varepsilon}^{16} v_p^{15},\\
p_p &= p_p^0 + \sum_{i=1}^{15} \sqrt{\varepsilon}^i(p_e^i + p_p^i).
\end{aligned}
\end{equation}
Each $(u_p^i,v_p^i)$ consists of two component: one supported near $Y=-1$, denoted by $(u_p^{i,-},v_p^{i,-})$; the other one supported near $Y=1$, denoted by $(u_p^{i,+},v_p^{i,+})$. The role of the first Prandtl corrector $(u_p^0,v_p^0)$ is to balance the positive boundary value $u_e$ at $Y = \pm 1$. This process generates an error of size $\sqrt{\varepsilon}$ and introduces a nonzero boundary value of $v_p^0$ at $Y = \pm 1$, which will be subsequently corrected by the Euler corrector $(u_e^1, v_e^1)$. And this process will repeat recurrently.

For the Prandtl correctors, we will only describe the construction and the properties of $(u_p^{i,-},v_p^{i,-})$. Define the scaled variables
\begin{equation}\label{Prandtl variables}
x = X, \quad y = \frac{Y+1}{\sqrt{\varepsilon}}.
\end{equation}

Then $(u_p^{0,-},v_p^{0,-})$ satisfies the nonlinear Prandtl equation \eqref{Prandtl} with the boundary conditions
$$
u_p^{0,-}|_{x = -L} = U^0_p, \quad u_p^{0,-}|_{y = 0} = -u_e^0|_{Y = -1}, \quad u_p^{0,-}|_{y \to \infty} = v_p^{0,-}|_{y \to \infty} = 0.
$$
Under some compatibility conditions of the boundary data $U_p^0$, the existence of solution was established by Oleinik \cite{Oleinik}, and the higher order regularties were obtained by Guo and Iyer \cite{GuoIyer21} for small $x$, and by Wang and Zhang \cite{WangZhang21} for large $x$.

\begin{theorem}
Assume the prescribed boundary data $U_p^0(y) \in C^\infty$ satisfies
$$
U_p^0 >0 ~~ \mbox{for}~ y>0, \quad \partial_y U_p^0(0) > 0, \quad \partial_y^2 U_p^0(y) - p^0_{px}(0) \sim y^2~~\mbox{near}~y=0.
$$
Then for some $L>0$ ($L$ can be any positive number if $p_{px}^0 \le 0$), there exists a solution $(u_p^{0,-}, v_p^{0,-})$ to \eqref{Prandtl} satisfying, for some $y_0,m_0 > 0$,
$$
\sup_{x\in (-L,0)} \sup_{y \in (0,y_0)} |u_p^{0,-}, v_p^{0,-}, \partial_y u_p^{0,-}, \partial_{yy} u_p^{0,-}, \partial_x u_p^{0,-}| \lesssim 1,
$$
\begin{equation}\label{Prandtl_growth}
\sup_{x\in (-L,0)} \sup_{y \in (0,y_0)} \partial_y u_p^0 > m_0 > 0.
\end{equation}
Further more, fix an $N > 0$, assume generic compatibility conditions (see \cite{GuoIyer21}*{Definition 1}) at the corner $(-L,0)$, and the exponential decay of derivatives:
$$
\|\partial_y^j U_p^0 e^{Ny} \|_{L^\infty} \le C_0 \quad \mbox{for}~j \ge 0.
$$
Then
\begin{equation*}
\|e^{Ny}\nabla^j\{ u_p^{0,-},v_p^{0,-}\}  \|_{L^\infty} \lesssim C_0 \quad \mbox{for}~j \ge 0.
\end{equation*}
\end{theorem}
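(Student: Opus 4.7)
The plan is to reduce the Prandtl system to a scalar degenerate parabolic equation via the Crocco transformation, establish local existence and a positive lower bound through the maximum principle, and then propagate higher regularity and exponential decay via weighted energy estimates. This follows the classical strategy pioneered by Oleinik and refined in \cites{GuoIyer21,WangZhang21}.

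First, introduce the Crocco variables $(x, u)$ with $w(x, u) := \partial_y u_p^{0,-}(x, y(x,u))$, well-defined as long as $\partial_y u_p^{0,-} > 0$. The Prandtl system reduces to the single quasilinear degenerate parabolic equation
\begin{equation*}
w \, w_{uu} - u \, w_x - p^0_{px}(x) \, w_u = 0
\end{equation*}
on the strip $\{-L < x < 0, \ 0 < u < U(x)\}$, where $U(x) = \lim_{y\to\infty}\bigl(u_e^0(x,-1)+u_p^{0,-}(x,y)\bigr)$. The hypotheses on $U_p^0$ translate into admissible data for $w$: strict positivity in the interior, a matching Dirichlet condition at $u = 0$, and the rate $\partial_y^2 U_p^0 - p_{px}^0(0) \sim y^2$ supplies the second-order corner compatibility at $(-L, 0)$. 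Oleinik's fixed-point argument, combined with the maximum principle applied to the equation above (with diffusion coefficient $w$), produces a local solution with $w \ge m_0 > 0$, yielding \eqref{Prandtl_growth}. When $p_{px}^0 \le 0$ the maximum principle closes globally, so $L$ may be arbitrary; otherwise one must take $L$ small to preclude separation. The claimed $L^\infty$ bounds up to second order then follow by bootstrapping Schauder estimates in the nondegenerate region $\{u > \delta\}$, supplemented by a direct boundary-layer analysis near $\{u = 0\}$ where the compatibility at $y = 0$ suppresses corner singularities.

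For the exponential decay statement, define the weighted energy
\begin{equation*}
E_k(x) := \sum_{j + \ell \le k} \int_0^\infty e^{2Ny} \bigl|\partial_x^j \partial_y^\ell u_p^{0,-}\bigr|^2 \, dy,
\end{equation*}
differentiate the Prandtl equation $j$ times in $x$ and $\ell$ times in $y$, multiply by $e^{2Ny} \partial_x^j \partial_y^\ell u_p^{0,-}$, and integrate by parts. The diffusion $-u_{pyy}$ produces the positive term $\int e^{2Ny}\bigl|\partial_x^j \partial_y^{\ell+1}u_p^{0,-}\bigr|^2\,dy$, while the transport terms are controlled using $\partial_y u_p^{0,-} \ge m_0$ and the already established $L^\infty$ bounds on lower-order derivatives. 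This yields a Gronwall-type inequality $E_k'(x) \le C E_k(x) + C_0^2$, and induction on $k$ together with the exponential decay of $U_p^0$ at $x = -L$ closes the estimate. The bounds on $v_p^{0,-}$ and its derivatives follow from the divergence-free relation $v_{py}^{0,-} = -u_{px}^{0,-}$ and $v_p^{0,-}|_{y=0} = 0$.

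The main obstacle is behavior at the corner $(-L, 0)$: each differentiation of \eqref{Prandtl} generates boundary terms that must vanish compatibly, which is the role of the generic compatibility conditions cited from \cite{GuoIyer21}*{Definition 1}. A secondary difficulty is preserving $\partial_y u_p^{0,-} > 0$ for large $L$ under adverse pressure, which is essentially a separation question and explains why the global-in-$x$ portion of the theorem is restricted to $p_{px}^0 \le 0$.
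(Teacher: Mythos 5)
The paper does not prove this statement: it is a citation of prior work, with existence attributed to Oleinik \cite{Oleinik} and the higher regularity and decay to Guo--Iyer \cite{GuoIyer21} (small $x$) and Wang--Zhang \cite{WangZhang21} (large $x$). Your outline correctly identifies the Crocco-transformation strategy behind Oleinik's existence result and the weighted-energy strategy behind the regularity results, so at the level of a sketch you are following the same route as the cited proofs. The observation that $p^0_{px}\le 0$ is what lets the maximum-principle argument close for arbitrary $L$, and that the corner compatibility conditions at $(-L,0)$ are what permit differentiating the equation to high order, is accurate.

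Two substantive corrections. First, the Crocco-transformed equation is wrong as written: with $w(x,u) = \partial_y u$ one obtains
\begin{equation*}
w^2\, w_{uu} - u\, w_x + p^0_{px}(x)\, w_u = 0,
\end{equation*}
i.e.\ the diffusion coefficient is $w^2$ (not $w$) and the pressure term enters with the opposite sign; the degeneracy of the diffusion is therefore quadratic, which matters for the barrier constructions and the maximum principle you invoke. Second, the weighted energy argument you sketch for the decay/regularity part treats $x$ as the evolution variable by testing against $e^{2Ny}\partial_x^j\partial_y^\ell u_p^{0,-}$, but the transport coefficient $\bar u = u_e^0(x,-1)+u_p^{0,-}$ vanishes at $y=0$, so the ``parabolic in $x$'' structure degenerates exactly at the wall; a Gronwall inequality of the form $E_k'(x)\le CE_k(x)+C_0^2$ does not follow directly. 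Handling this degeneracy (via careful choices of multipliers near $y=0$ together with the compatibility conditions) is precisely the technical core of \cite{GuoIyer21} and \cite{WangZhang21}, and your sketch glosses over it with the phrase ``direct boundary-layer analysis near $\{u=0\}$.'' As an outline of the cited strategy your proposal is on target; as a self-contained proof it leaves this central difficulty unaddressed.
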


The theorem above provides the existence of $(u_p^{0,-}, v_p^{0,-})$, and $(u_p^{0,+}, v_p^{0,+})$ can be constructed in a similar manner. Since we require that $(u_p^{0,-} = v_p^{0,-}) = 0$ at $Y=1$, we introduce a smooth cut-off function $\chi(Y) \in C_c^\infty([0,2))$ such that $\chi = 1$ in $[0,1/2]$. We then define the first Prandtl corrector $(u_p^0, v_p^0)$ as follows:
\begin{equation}\label{u_p^0}
\begin{aligned}
u_p^0(X,Y):= &\chi(Y+1)u_p^{0,-}(X,\frac{Y+1}{\sqrt{\varepsilon}}) - \sqrt{\varepsilon}\chi'(Y+1) \int_0^X v_p^{0,-} (s, \frac{Y+1}{\sqrt{\varepsilon}}) \, ds\\
+& \chi(1-Y)u_p^{0,+}(X,\frac{1-Y}{\sqrt{\varepsilon}}) + \sqrt{\varepsilon}\chi'(1-Y) \int_0^X v_p^{0,-} (s, \frac{1-Y}{\sqrt{\varepsilon}}) \, ds,\\
v_p^0(X,Y):= & \chi(Y+1) v_p^{0,-} (X, \frac{Y+1}{\sqrt{\varepsilon}}) + \chi(1-Y) v_p^{0,-} (X, \frac{1-Y}{\sqrt{\varepsilon}}).
\end{aligned}
\end{equation}
Plugging into the equation \eqref{Prandtl}, we observe that the error introduced by the cut-off function is of order $\sqrt\varepsilon$, which contributes to the forcing term for the next Prandtl layer.

The $i$-th Euler corrector $(u_e^i, v_e^i)$ satisfies the linearized Euler equation around the background Euler flow $(u_e^0, v_e^0)$:
\begin{equation}\label{i-th Euler}
\left\{
\begin{aligned}
&u_e^0 u^i_{eX} + u_{eX}^0 u_e^i +  v_e^0 u^i_{eY} + u_{eY}^0 v_e^i + p_{eX}^i = f_{e,1}^i,\\
&u_e^0 v^i_{eX} + v_{eX}^0 u_e^i +  v_e^0 v^i_{eY} + v_{eY}^0 v_e^i + p_{eY}^i = f_{e,2}^i,\\
&u^0_X + v^0_Y = 0,\\
&v_e^i|_{Y = \pm 1} = - u_p^{i-1} |_{Y = \pm 1}, \quad v_e^i |_{X = -L, 0} = V^i_{e, \{-L,0\}}, \quad u_e^i |_{X = -L} = U_e^i,
\end{aligned}
\right.
\end{equation}
where the forcing terms $f_{e,1}^i$ and $f_{e,2}^i$ are errors produced from the previous correctors. Assuming smoothness for the boundary datum and the compatibility conditions at the corners, by the standard elliptic theory, there exists a solution $(u_e^i, v_e^i)$ to \eqref{i-th Euler} satisfying
$$
\|\{u_e^i, v_e^i\} \|_{H^k} \lesssim 1, \quad k \ge 0.
$$
The $i$-th Prandtl corrector for the lower boundary $(u_p^{i,-},v_p^{i,-})$ satisfies the linearized Prandtl equation around $(\bar u, \bar v): = \Big(u_p^0+ u_e^0(X,-1), v_p^0- v_p^0(X,-1)\Big)$:
\begin{equation}\label{i-th Prandtl}
\left\{
\begin{aligned}
&\bar u u^{i,-}_{px} + u^{i,-}_{p} \bar{u}_x +  \bar{u}_{y}[v_p^{i,-} - v_p^{i,-}|_{y=0}] + \bar{v} u_{py}^{i,-} + p_{px}^{i,-} - u_{pyy}^{i,-} = f^i,\\
&p_{py}^{i,-} = 0,\\
&u_{px}^{i,-} + v_{py}^{i,-} = 0,\\
& u_p^{i,-}|_{y = 0} = -u_e^i|_{Y = -1}, \quad u_p^{i,-}|_{y \to \infty} = v_p^{i,-}|_{y \to \infty} = 0, \quad v_p^{i,-}|_{x = -L} = V^i_p,
\end{aligned}
\right.
\end{equation}
where $(x,y)$ are the Prandtl variables \eqref{Prandtl variables}, the forcing term $f^i$ is errors produced from the previous corrector. Note that for the final layer, we will impose $v_p^{15,-}|_{y =0} = 0$ instead of $v_p^{15,-}|_{y \to \infty} = 0$ and perform a cut-off, whose error is negligible. The existence of solution to \eqref{i-th Prandtl} and the corresponding estimates were established in \cite{GuoIyer21}.

\begin{theorem}
Assume the prescribed boundary data $V_p^i(y) \in C^\infty$ satisfies generic compatibility conditions at the corner $(-L,0)$, and the exponential decay of derivatives for some $N>0$:
$$
\|\partial_y^j V_p^i e^{Ny} \|_{L^2} \le C_0 \quad \mbox{for}~j \ge 0.
$$
Assume the forcing $f^i$ satisfies the bounds
$$
\|\partial_x^k \partial_y f^i e^{Ny} \|_{L^2}  \le C_1 \quad \mbox{for}~k \ge 0.
$$
there exists a solution $(u_p^{i,-}, v_p^{i,-})$ to \eqref{Prandtl} satisfying
$$
\|e^{Ny}\nabla^{\alpha}\{ u_p^{i,-},v_p^{i,-}\}  \|_{L^\infty} \le C(C_0,C_1) \quad \mbox{for}~\alpha \ge 0.
$$
\end{theorem}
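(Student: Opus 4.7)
The linearized Prandtl system \eqref{i-th Prandtl} is a degenerate parabolic equation with $x$ playing the role of time and with the coefficient $\bar u$ vanishing linearly at $y=0$ (by \eqref{Prandtl_growth}) and decaying as $y\to\infty$. My plan is to follow the program of \cite{GuoIyer21}: construct the solution via a parabolic regularization plus compactness, with the limit passage driven by weighted a priori estimates that accommodate simultaneously the degeneracy, the exponential $e^{Ny}$ weight, and arbitrarily many tangential derivatives $\partial_x^k$. To execute this, I would first switch to the stream function $\psi$ determined by $\psi_y = u_p^{i,-}$, $\psi(x,0)=0$, so that $v_p^{i,-} - v_p^{i,-}|_{y=0} = -\psi_x$, and absorb the purely $x$-dependent pressure $p^{i,-}_{px}$ (known from the $i$-th Euler corrector) into the forcing. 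The system becomes
$$
\bar u \psi_{xy} - \bar u_y \psi_x + \bar u_x \psi_y + \bar v \psi_{yy} - \psi_{yyy} = \tilde f^i,
$$
and the key trick is the quotient $q := \psi/\bar u$; this is regular up to $y=0$ precisely because of \eqref{Prandtl_growth}, and a direct computation shows that $q$ satisfies an equation of the schematic form $\bar u^2 q_{xy} - \bar u\, q_{yyy} + (\mathrm{l.o.t.}) = \tilde f^i$, whose quadratic-in-$\bar u$ top-order part is exactly what is needed to close energy estimates through the degeneracy.

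Testing this $q$-equation against $q$ and integrating over $y\in(0,\infty)$, $x\in(-L,x_0)$, after integration by parts in $y$, yields the fundamental identity
$$
\tfrac{1}{2}\int_0^\infty (\bar u q_y)^2(x_0,y)\, dy + \int_{-L}^{x_0}\!\!\int_0^\infty (\bar u q_{yy})^2\, dy\, dx = \int_{-L}^{x_0}\!\!\int_0^\infty \tilde f^i q\, dy\, dx + (\mathrm{l.o.t.}),
$$
where the lower-order terms are absorbed by the Hardy-type inequality $\|q\|_{L^2_y} \lesssim \|\bar u q_y\|_{L^2_y}$, valid because $\bar u \sim y$ near $y=0$ and $q(x,0)$ is controlled by the trace $u_e^i/\bar u_y|_{y=0}$. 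The exponential weight is folded in by testing instead with $e^{2Ny}q$; the weight-commutator terms are absorbed into the dissipation once $N$ is large relative to $\|\bar u_y\|_\infty + \|\bar v\|_\infty$, yielding the base weighted estimate. Higher regularity is obtained by commuting $\partial_x^k$ through the equation (commutators with $\bar u, \bar u_x, \bar u_y, \bar v$ are of strictly lower order) and iterating, using the generic compatibility conditions at the corner $(-L,0)$ to ensure that $\partial_x^k V_p^i$ is consistent with the trace at $y=0$. Pure and mixed $y$-derivatives are recovered algebraically by solving for $\psi_{yyy}$, and Sobolev embedding on the strip $(-L,0)\times(0,\infty)$ upgrades the weighted $H^k$ bounds to the claimed pointwise estimate $\|e^{Ny}\nabla^\alpha\{u_p^{i,-},v_p^{i,-}\}\|_{L^\infty}\le C(C_0,C_1)$.

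For existence I would regularize by adding $-\delta \psi_{xxx}$ and truncating $y\in(0,Y_*)$ with artificial Dirichlet data at $y=Y_*$; the resulting linear, uniformly parabolic problem is solvable by classical theory, and the estimates above are $\delta$- and $Y_*$-independent, so weak compactness furnishes the desired solution in the limit. The main obstacle is the simultaneous presence of the degeneracy $\bar u\to 0$ at $y=0$ and the nonlocal term $\bar u_y(v_p^{i,-} - v_p^{i,-}|_{y=0})$: without the substitution $q = \psi/\bar u$ one loses the quadratic $\bar u^2$ top-order structure that makes the energy identity work, while the nonlocality obstructs any direct maximum-principle approach. Both the regularity of $q$ and the exponential-weight analysis rest squarely on the hypotheses of the statement, namely the nondegeneracy \eqref{Prandtl_growth} inherited from the nonlinear Prandtl theorem and the decay assumptions on $V_p^i$ and $f^i$.
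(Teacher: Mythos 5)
The paper does not supply its own proof of this theorem: it simply states the result and cites \cite{GuoIyer21} (Guo--Iyer, \emph{Regularity and expansion for steady Prandtl equations}), so there is no in-paper argument to compare against. Your sketch is a faithful reconstruction of the Guo--Iyer program for that cited result: stream-function reformulation, the quotient $q=\psi/\bar u$ exploiting the nondegeneracy $\bar u_y(x,0)>0$, weighted energy estimates with $e^{Ny}$, commutation of $\partial_x^k$ using the corner compatibility conditions, and parabolic regularization plus compactness for existence. That is the right strategy and the right set of ingredients.

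One imprecision worth flagging: the ``fundamental identity'' you write down is not what testing the $q$-equation against $q$ (or $e^{2Ny}q$) actually produces. From the $\bar u\,\partial_x$ structure of the Prandtl equation, the transported boundary quantity at $x=x_0$ should carry a single factor of $\bar u$ (something like $\int \bar u\,(\cdot)^2(x_0,y)\,dy$), not the form $\tfrac12\int(\bar u q_y)^2(x_0,y)\,dy$, and the dissipation is generated by $u_{yy}=(\bar u q)_{yyy}$ rather than arising cleanly as $\int(\bar u q_{yy})^2$ with no cross terms; the quadratic-in-$\bar u$ top-order piece comes with commutator terms $\bar u\bar u_y q_x$ and so on that need the Hardy inequality to absorb. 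This doesn't change the overall program, and you do gesture at the Hardy inequality, but as written the displayed identity would not follow from the stated integration by parts. Also, the trace $q(x,0)$ equals $-u_e^i(x,-1)/\bar u_y(x,0)$ (with the minus sign from the boundary condition $u_p^{i,-}|_{y=0}=-u_e^i|_{Y=-1}$), not $u_e^i/\bar u_y$ as you wrote, though this is just a sign.
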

After constructing $(u_p^{i,\pm}, v_p^{i,\pm})$, $(u_p^i, v_p^i)$ will be constructed similarly to \eqref{u_p^0}. This completes the construction of $\phi_p$. Recall the notations
$$\phi_a = \phi_e + \phi_p, \quad u_a = \phi_{aY}, \quad \mbox{and} \quad v_a = - \phi_{aX}.$$ 
Through a straightforward computation, we obtain
\begin{equation}\label{phi_a_error}
u_a \Delta \phi_{aX} + v_a \Delta \phi_{aY} - \varepsilon \Delta^2 \phi_a= O(\varepsilon^{7}).
\end{equation}

\subsection{Construction of $\phi_r$}

We now turn to the construction of $\phi_r$. The objective is to construct a $\phi_r$ with small scales in $X$ so that the error
$$
R = (u_a + u_r) \Delta (\phi_{aX} + \phi_{rX}) + (v_a + v_r) \Delta (\phi_{aY} + \phi_{rY}) - \varepsilon \Delta^2 (\phi_a + \phi_r) = O(\varepsilon^{7}).
$$
We introduce the scaled variables
$$
x = \frac{X}{\varepsilon}, \quad y = Y,
$$
which will be used throughout this subsection. First, we formally write
$$
\phi_r(X,Y) = \sum_{i=0}^{17} \varepsilon^{1+i/2} \phi^i (x,y),
$$
and plug it into the error term, we have
\begin{align*}
R =& \Big(u_a + \varepsilon\sum_{i=0}^{17}\varepsilon^{i/2} \phi^i_y \Big) \Big(-\Delta v_a + \varepsilon^{-2} \sum_{i=0}^{17}\varepsilon^{i/2} \phi^i_{xxx} + \sum_{i=0}^{17}\varepsilon^{i/2} \phi^i_{xyy} \Big)\\
&+ \Big(v_a - \sum_{i=0}^{17}\varepsilon^{i/2} \phi^i_x \Big) \Big(\Delta u_a + \varepsilon^{-1} \sum_{i=0}^{17}\varepsilon^{i/2} \phi^i_{xxy} + \varepsilon\sum_{i=0}^{17}\varepsilon^{i/2} \phi^i_{yyy} \Big)\\
&- \varepsilon \Delta^2 \phi_a - \varepsilon^{-2} \sum_{i=0}^{17}\varepsilon^{i/2} \phi^i_{xxxx} - \varepsilon^2 \sum_{i=0}^{17}\varepsilon^{i/2} \phi^i_{yyyy} - 2 \sum_{i=0}^{17}\varepsilon^{i/2} \phi^i_{xxyy}.
\end{align*}
We will construct the profiles $\phi^i$ to be compactly supported in $y$. Recall that $(u_p^j , v_p^j)$ is the $j$-th Prandtl layer constructed in \eqref{Prandtl corrector}, which has fast decay away from $Y = \pm 1$. Therefore,
$$
|u_p^j \phi^i| + |v_p^j \phi^i| = O(\varepsilon^{10})
$$
for any $i,j$ when $\varepsilon$ is small. Combining with \eqref{phi_a_error}, we can rewrite
\begin{equation}\label{R}
\begin{aligned}
R =& \Big(\sum_{i=0}^{15} \varepsilon^{i/2} u_e^i + \varepsilon\sum_{i=0}^{17}\varepsilon^{i/2} \phi^i_y \Big) \Big(\varepsilon^{-2} \sum_{i=0}^{17}\varepsilon^{i/2} \phi^i_{xxx} + \sum_{i=0}^{17}\varepsilon^{i/2} \phi^i_{xyy} \Big)\\
&- \Big(\varepsilon\sum_{i=0}^{17}\varepsilon^{i/2} \phi^i_y \Big)\Big( \sum_{i=0}^{15} \varepsilon^{i/2} \Delta v_e^i \Big) - \Big( \sum_{i=0}^{17}\varepsilon^{i/2} \phi^i_x \Big) \Big(\sum_{i=0}^{15} \varepsilon^{i/2} \Delta u_e^i \Big)\\
&+ \Big(\sum_{i=0}^{15} \varepsilon^{i/2} v_e^i - \sum_{i=0}^{17}\varepsilon^{i/2} \phi^i_x \Big) \Big( \varepsilon^{-1} \sum_{i=0}^{17}\varepsilon^{i/2} \phi^i_{xxy} + \varepsilon\sum_{i=0}^{17}\varepsilon^{i/2} \phi^i_{yyy} \Big)\\
&- \varepsilon^{-2} \sum_{i=0}^{17}\varepsilon^{i/2} \phi^i_{xxxx} - \varepsilon^2 \sum_{i=0}^{17}\varepsilon^{i/2} \phi^i_{yyyy} - 2 \sum_{i=0}^{17}\varepsilon^{i/2} \phi^i_{xxyy} + O(\varepsilon^{7}).
\end{aligned}
\end{equation}
The most singular term is $\varepsilon^{-2} (u_e^0 \phi^0_{xxx} - \phi^0_{xxxx})$. Let $h(Y)$ be a smooth, compactly supported function, which will serve as the leading-order boundary value of $v_r$ at $X = 0$. We define
$$
\phi^0(x,y) = \frac{h(y)}{u^0_e(0,y)} \exp\Big\{\int_0^{x} u_e^0 (\varepsilon x', y) \, dx'\Big\},
$$
then
$$
u_e^0 \phi^0_{xxx} - \phi^0_{xxxx} = 0,
$$
and $(\varepsilon\phi^0)_X |_{X = 0} = h(Y)$. It is straightforward to see that
\begin{equation}\label{phi^0 decay}
|\nabla^m \phi^0(x,y)| \lesssim \exp\Big\{\int_0^{x} u_e^0 (\varepsilon x', y) \, dx'\Big\},\quad (x,y) \in (-L/\varepsilon, 0)\times(-1,1), m\ge 0.
\end{equation}
Next, we define $\phi^k$ inductively to match the terms of order $\varepsilon^{-2 + k/2}$ in \eqref{R}. By collecting the $O(\varepsilon^{-2 + k/2})$ terms in \eqref{R} for $1 \le k \le 17$, we have
\begin{equation}\label{phi^k}
u_e^0\phi_{xxx}^k - \phi_{xxxx}^k = f_k,
\end{equation}
with
\begin{align*}
f_k(x,y) =& - \sum_{i=1}^k u_e^i \phi_{xxx}^{k-i} - \sum_{i=0}^{k-4} u_e^i \phi_{xyy}^{k-4-i} - \sum_{i=0}^{k-2} \phi_y^i \phi_{xxx}^{k-2-i} - \sum_{i=0}^{k-6} \phi_y^i \phi_{xyy}^{k-6-i}\\
&+ \sum_{i=0}^{k-6} \phi_y^i \Delta v_e^{k-6-i} + \sum_{i=0}^{k-4} \phi_x^i \Delta u_e^{k-4-i} - \sum_{i=0}^{k-2} v_e^i \phi_{xxy}^{k-2-i} - \sum_{i=0}^{k-6}v_e^i \phi_{yyy}^{k-6-i}\\ 
&+ \sum_{i=0}^{k-2} \phi_x^i \phi_{xxy}^{k-2-i} + \sum_{i=0}^{k-6}\phi_x^i \phi_{yyy}^{k-6-i} + \phi_{yyyy}^{k-8} + 2 \phi_{xxyy}^{k-4},
\end{align*}
where we adopt the convention that a summation equals zero when the upper limit is strictly less than its lower limit.We also extend the definitions of $u_e^j$ and $v_e^j$ to be 0 when $j > 15$, and $\phi^j$ to be 0 when $j < 0$. We will solve \eqref{phi^k} inductively with the boundary conditions
\begin{equation*}\label{phi^k boundary conditions}
\phi^k (-L/\varepsilon, y) = \phi^k_{x} (-L/\varepsilon, y) = \phi^k_{xx} (-L/\varepsilon, y) = \phi^k_{xxx} (0, y) = 0.
\end{equation*}
When $k = 1$, $f_1(x,y) = -u_e^1(\varepsilon x, y) \phi_{xxx}^0 (x,y)$. We can explicitly solve $\phi^1$ as
\begin{align*}
&\phi^1(x,y) =\\
&\int_{-\frac{L}{\varepsilon}}^x \int_{-\frac{L}{\varepsilon}}^{x'''''} \int_{-\frac{L}{\varepsilon}}^{x''''} e^{\int_0^{x'''} u_e^0 (\varepsilon x', y) \, dx'} \int_{0}^{x'''} e^{-\int_0^{x''} u_e^0 (\varepsilon x', y) \, dx'} f_1(x'',y) \, dx'' dx''' dx'''' dx'''''.
\end{align*}
From \eqref{phi^0 decay}, we have
\begin{align}\label{phi_1}
|\phi^1(x,y)| \lesssim \int_{-\frac{L}{\varepsilon}}^x \int_{-\frac{L}{\varepsilon}}^{x'''''} \int_{-\frac{L}{\varepsilon}}^{x''''} -x''' e^{\int_0^{x'''} u_e^0 (\varepsilon x', y) \, dx'} dx''' dx'''' dx'''''.
\end{align}
By \eqref{assumption1} and integration by parts, we can estimate the first integral
\begin{equation*}
\begin{aligned}
&\int_{-\frac{L}{\varepsilon}}^{x''''} -x''' e^{\int_0^{x'''} u_e^0 (\varepsilon x', y) \, dx'} dx''' \lesssim \int_{-\frac{L}{\varepsilon}}^{x''''} -x'''u_e^0 (\varepsilon x''', y) e^{\int_0^{x'''}  u_e^0 (\varepsilon x', y) \, dx'} dx''' \\
=& \int_{-\frac{L}{\varepsilon}}^{x''''} -x''' \, d\Big(e^{\int_0^{x'''} u_e^0 (\varepsilon x', y) \, dx'} \Big)\\
=& -x'''' e^{\int_0^{x''''} u_e^0 (\varepsilon x', y) \, dx'} - \frac{L}{\varepsilon} e^{\int_0^{-\frac{L}{\varepsilon}} u_e^0 (\varepsilon x', y) \, dx'} + \int_{-\frac{L}{\varepsilon}}^{x''''} e^{\int_0^{x'''} u_e^0 (\varepsilon x', y) \, dx'} \, dx'''\\
\lesssim&  (1-x'''') \exp\Big\{\int_0^{x''''} u_e^0 (\varepsilon x', y) \, dx'\Big\},  \quad (x'''',y) \in (-L/\varepsilon, 0)\times(-1,1).
\end{aligned}
\end{equation*}
Similarly, one can apply the similar argument for the other two integrals in \eqref{phi_1}, and obtain for $m \ge 0$,
$$
|\nabla^m \phi^1(x,y)| \lesssim (1-x) \exp\Big\{\int_0^{x} u_e^0 (\varepsilon x', y) \, dx'\Big\},  \quad (x,y) \in (-L/\varepsilon, 0)\times(-1,1).
$$
Next, assuming that 
$$
|\nabla^m f_k(x,y)| \lesssim (1-x)^j \exp\Big\{\int_0^{x} u_e^0 (\varepsilon x', y) \, dx'\Big\}
$$
for some $j \ge 0$, we define
\begin{align*}
&\phi^k(x,y) :=\\
&\int_{-\frac{L}{\varepsilon}}^x \int_{-\frac{L}{\varepsilon}}^{x'''''} \int_{-\frac{L}{\varepsilon}}^{x''''} e^{\int_0^{x'''} u_e^0 (\varepsilon x', y) \, dx'} \int_{0}^{x'''} e^{-\int_0^{x''} u_e^0 (\varepsilon x', y) \, dx'} f_k(x'',y) \, dx'' dx''' dx'''' dx'''''.
\end{align*}
We then obtain the estimate
$$
|\nabla^m \phi^k(x,y)| \lesssim (1-x)^{j+1} \exp\Big\{\int_0^{x} u_e^0 (\varepsilon x', y) \, dx'\Big\},  \quad (x,y) \in (-L/\varepsilon, 0)\times(-1,1)
$$
similarly as above. This process allows us to define $\phi^k$ inductively, so that it satisfies 
\begin{equation}\label{phi^k decay}
|\nabla^m \phi^k(x,y)| \le C_1 e^{C_2x},  \quad (x,y) \in (-L/\varepsilon, 0)\times(-1,1), m \ge 0, 0 \le k \le 17
\end{equation}
for some positive constants $C_1$ and $C_2$ depending on $m$. Finally, let $\chi \in C_c^\infty([0,1))$ be a smooth cut-off function such that $\chi = 1$ on $[0,1/2)$, we define
$$
\phi_r(X,Y) = \sum_{i=0}^{17} \varepsilon^{1+i/2} \chi(-\frac{X}{L}) \phi^i (\frac{X}{\varepsilon},Y),
$$
and
\begin{equation}\label{def_u_r v_r}
u_r(X,Y) = - \phi_{rY} (X,Y), \quad v_r(X,Y) = \phi_{rX} (X,Y).
\end{equation}
Note that the error introduced by the cut-off function is exponentially small in $\varepsilon$ due to \eqref{phi^k decay}, and the profile $(u_r,v_r)$ satisfies the boundary conditions:
\begin{align*}
&(u_r,v_r)|_{Y = \pm 1} = (u_r,v_r)|_{X = - L} = 0, \\
&(u_r,v_r)|_{X=0} = (\varepsilon h(Y)/u_e^0 + O(\varepsilon^{3/2}), h(Y) + O(\varepsilon^{1/2})).
\end{align*}
More importantly, near $X = 0$,
$$
v_r \sim h(Y) \frac{u_e^0(X,Y)}{u_e^0(0,Y)} \exp\Big\{\int_0^{\frac{X}{\varepsilon}} u_e^0 (\varepsilon x', Y) \, dx'\Big\},
$$
which reveals a boundary layer of thickness $\varepsilon$ near $X = 0$.

\section{Estimates for the system of remainders}\label{sec_est}

In this section, we study the linear version of \eqref{remainder_eq}:
\begin{equation}\label{remainder_linear}
u_s \Delta \phi_X - \Delta u_s \phi_X + v_s \Delta \phi_Y - \Delta v_s \phi_Y - \varepsilon \Delta^2 \phi = f
\end{equation}
with the boundary conditions
\begin{equation}\label{remainder_boundary_condition}
\begin{aligned}
&\phi|_{X=-L} = \phi|_{X=0} = \phi|_{Y=-1} = \phi|_{Y=1} \\
=& \phi_X|_{X=-L} = \phi_X|_{X=0} = \phi_Y|_{Y=-1} = \phi_Y|_{Y=1} = 0.
\end{aligned}
\end{equation}
We then establish an $H^2$ estimate. We recall the notations
$$
u_a= u_e^0 + u_p, \quad v_a= v_e^0 + v_p, \quad \mbox{and} \quad q= \frac{\phi}{u_a}.
$$
The equation \eqref{remainder_linear} then becomes
\begin{equation}\label{linear_quotient}
(u_a^2q_X)_{XX} + (u_a^2 q_Y)_{XY} - \varepsilon \Delta^2 \phi + J[\phi] = f,
\end{equation}
where
$$
J[\phi] = u_r \Delta \phi_X - \Delta u_r \phi_X - u_{aX} \Delta \phi + \Delta u_{aX} \phi + v_s \Delta \phi_Y - \Delta v_s \phi_Y,
$$
with the boundary conditions
\begin{align*}
q|_{X=-L} = q|_{X=0} = q|_{Y=-1} = q|_{Y=1} = q_X|_{X=-L} = q_X|_{X=0} = 0.
\end{align*}
Note that since $u_a = 0$ on $\{Y = \pm 1\}$, we do not have $q_Y = 0$ on $\{Y = \pm 1\}$. Recall that $\widetilde{Y}= (1-Y)(1+Y)$. First it is straightforward to see the following properties for the approximated profiles constructed in Section \ref{sec_app}.
\begin{lemma}
For $i,j \in \bZ$ such that $i + j \ge 0$, $m \in \bN$, we have
\begin{align}
\varepsilon^{i/2} \| \widetilde{Y}^j \partial_{Y}^{i+j} \partial_{X}^m u_p \|_\infty + \varepsilon^{(i-1)/2} \| \widetilde{Y}^j \partial_{Y}^{i+j} \partial_{X}^m v_p \|_\infty &\lesssim 1, \label{u_a_property}\\
\varepsilon^{i-1} \| X^j \partial_{X}^{i+j} u_r \|_\infty + \varepsilon^{i} \| X^j \partial_{X}^{i+j} v_r \|_\infty &\lesssim \|h\|_{\infty},\label{u_r_property}\\
\varepsilon^{i-1} \| X^j \partial_{X}^{i+j} \partial_{Y}^{m+1} u_r \|_\infty + \varepsilon^{i} \| X^j \partial_{X}^{i+j}  \partial_{Y}^{m+1} v_r \|_\infty &\lesssim 1.\label{u_r_property2}
\end{align}
\end{lemma}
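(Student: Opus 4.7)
\emph{Plan.} The lemma is a collection of sharp $\varepsilon$-scaling bounds for the profiles built in Section \ref{sec_app}; each inequality follows in a routine way from the explicit constructions together with the decay estimates already established there. I would handle the three inequalities in turn, keeping careful track of $\varepsilon$ powers.

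For \eqref{u_a_property}, pass to the Prandtl variables $y=(Y+1)/\sqrt\varepsilon$ near $Y=-1$ (the case $Y=+1$ being symmetric). In view of \eqref{u_p^0} and the analogous formulas for higher correctors, $u_p$ is a sum of terms of the form $\chi(Y+1)\,u_p^{j,-}(X,y)$ up to $\sqrt\varepsilon$-small integral pieces, so each chain of $i+j$ $Y$-derivatives costs a factor $\varepsilon^{-(i+j)/2}$. On the support of $\chi(Y+1)$ one has $\widetilde Y=(1-Y)(1+Y)\sim 2\sqrt\varepsilon\,y$, so $\widetilde Y^{\,j}$ contributes $\varepsilon^{j/2}y^j$. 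Combining, the $\varepsilon$ powers give $\varepsilon^{-i/2}$, matching the left-hand side, while $y^j\partial_y^{i+j}\partial_x^m u_p^{j,-}$ is uniformly bounded thanks to the exponential-in-$y$ decay of the Prandtl profiles provided by the theorems quoted from \cites{Oleinik,GuoIyer21}. The bound on $v_p$ acquires one extra $\sqrt\varepsilon$ from $v_p=\sqrt\varepsilon\,v_p^{0,-}+\cdots$ in \eqref{Prandtl corrector}, which produces the shift from $\varepsilon^{i/2}$ to $\varepsilon^{(i-1)/2}$.

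For \eqref{u_r_property}, start from $\phi_r(X,Y)=\sum_{k=0}^{17}\varepsilon^{1+k/2}\chi(-X/L)\phi^k(X/\varepsilon,Y)$, which gives $v_r=\phi_{rX}=\sum_k \varepsilon^{k/2}\phi^k_x(X/\varepsilon,Y)$ and $u_r=-\phi_{rY}=-\sum_k \varepsilon^{1+k/2}\phi^k_y(X/\varepsilon,Y)$, modulo errors from $\chi'(-X/L)$ that are exponentially small in $\varepsilon$ by \eqref{phi^k decay}. Each outer $X$-derivative costs a factor $\varepsilon^{-1}$, so writing $X=\varepsilon x$ I would estimate
\[
|X^j\partial_X^{i+j}v_r|\;\lesssim\;\sum_{k=0}^{17}\varepsilon^{k/2-i-j}\,|X|^j\,|\phi^k_{x\cdots x}(X/\varepsilon,Y)|\;\lesssim\;\sum_{k=0}^{17}\varepsilon^{k/2-i}\,|x|^j e^{C_2 x}\;\lesssim\;\varepsilon^{-i},
\]
using $|z|^j e^{C_2 z/2}\lesssim 1$ on $(-\infty,0]$ to absorb the polynomial weight. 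The $\|h\|_\infty$ factor on the right-hand side is inherited from $\phi^0=\frac{h(y)}{u_e^0(0,y)}\exp\{\cdots\}$ being linear in $h$, together with the inductive formula for $\phi^k$ expressing it as an iterated integral of expressions at least linear in previously constructed $\phi^j$ with $j<k$; the smallness $\|h\|_\infty\ll 1$ keeps higher-order-in-$h$ contributions dominated by $\|h\|_\infty$. The bound for $u_r$ is identical except that $u_r$ carries one extra factor of $\varepsilon$, which accounts for $\varepsilon^{i-1}$ in place of $\varepsilon^{i}$.

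Finally, \eqref{u_r_property2} is obtained by repeating the argument for \eqref{u_r_property} with $\phi^k$ replaced throughout by $\partial_y^{m+1}\phi^k$; since $\partial_Y$ acts without any $\varepsilon$ rescaling and \eqref{phi^k decay} applies uniformly to all $Y$-derivatives, the $\varepsilon$ count is unchanged, while the $\|h\|_\infty$ factor on the right-hand side gets replaced by fixed $C^{m+1}$-bounds on $h$ that are absorbed into the implicit constant. \textbf{Main difficulty.} The only subtle point is verifying that the cut-off $\chi(-X/L)$ contributes only negligible errors: when $\chi'$ is nontrivial, $X\sim -L/2$, so $e^{C_2 X/\varepsilon}$ is exponentially small in $\varepsilon$ and all cut-off errors are harmless. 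Beyond this, the proof is essentially bookkeeping.
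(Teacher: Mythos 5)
The paper states this lemma without proof (``straightforward to see''), so there is no argument to compare against; I assess your proposal on its own.

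Your scaling accounting is correct and captures the intended reasoning. In \eqref{u_a_property}, $\partial_Y=\varepsilon^{-1/2}\partial_y$ and $\widetilde Y\sim\sqrt{\varepsilon}\,y$ on the Prandtl support give the net $\varepsilon^{-i/2}$, and exponential decay in $y$ absorbs the weight $y^j$. In \eqref{u_r_property}--\eqref{u_r_property2}, $\partial_X=\varepsilon^{-1}\partial_x$ costs $\varepsilon^{-1}$, $X^j=\varepsilon^j x^j$ supplies $\varepsilon^{j}$, and the exponential decay of \eqref{phi^k decay} absorbs the polynomial $|x|^j$. The observation that the $\chi(-X/L)$ cut-off errors are exponentially small where $\chi'\neq 0$ is the right reason they are harmless. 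This is the correct route.

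Two points deserve sharpening. First, for \eqref{u_a_property} you only discuss the $\chi\,u_p^{j,-}$ terms and the ``$\sqrt\varepsilon$-small integral pieces'' from the cut-off, but $u_p$ in \eqref{Prandtl corrector} also contains the Euler correctors $\sqrt{\varepsilon}^{\,\ell}\,u_e^{\ell}$, $\ell\ge 1$, which are \emph{not} localized near $Y=\pm1$ and so do not see the $\widetilde Y\sim\sqrt\varepsilon\,y$ gain. For those, $\widetilde Y^j$ is merely bounded, so their contribution to the left side of \eqref{u_a_property} is $O(\varepsilon^{(i+\ell)/2})$, and one must observe that $\ell\ge 1$ compensates the $\varepsilon^{i/2}$ prefactor in all cases that the paper actually invokes (indeed the energy estimates only use $i\ge 0$). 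Mentioning this explicitly would close the gap.

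Second, the $\|h\|_\infty$ factor in \eqref{u_r_property} is more delicate than ``$\phi^k$ is at least linear in $\phi^j$'' suggests. While $\phi^0$ and $\phi^1$ (whose forcing $f_1=-u_e^1\phi^0_{xxx}$ involves no $y$-derivatives of $\phi^0$) are controlled by $\|h\|_\infty$, the forcings $f_k$ for $k\ge 2$ involve $\phi^i_y$, $\phi^i_{xyy}$, etc., which bring in $h'$, $h''$, and so are \emph{not} $\lesssim\|h\|_\infty$. The lemma is nonetheless true because those contributions to $v_r$ come with extra factors $\varepsilon^{k/2}$ with $k\ge 2$, hence are $\lesssim\varepsilon\,C(h)$ and are absorbed into $\|h\|_\infty$ once $\varepsilon$ is small enough depending on the fixed function $h$ (consistent with the hypothesis $\varepsilon\ll 1$). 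Your proposal glosses over this by appealing only to $\|h\|_\infty\ll 1$, which controls the quadratic-in-$h$ terms but not the higher-$Y$-derivative terms; spelling out the $\varepsilon$-smallness argument is needed to make the $\|h\|_\infty$ bound in \eqref{u_r_property} rigorous, and it also clarifies why \eqref{u_r_property2} drops to $\lesssim 1$ once a genuine $Y$-derivative is forced onto $h$.
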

We will use the following Hardy-type inequalities. Idea essentially follows from \cite{GuoIyer23}*{Lemma 3.7}.
\begin{lemma}
Let $F(X,Y) \in H^1$, $G(X,Y) \in H^1$ with $G(0,Y) = 0$, and $\sigma > 0$, we have
\begin{align}
&\|F\|_{2} \lesssim \sqrt{\varepsilon} \sqrt{\sigma} \|\sqrt{u_a} F_Y\|_2 + \frac{1}{\sigma} \|u_a F\|_2,\label{Hardy_1} \\
&\| \sqrt{u_a} \frac{G}{X} \|_2 \lesssim \| \sqrt{u_a} G_X \|_2. \label{Hardy_2}
\end{align}
\end{lemma}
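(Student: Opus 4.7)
\emph{Plan for \eqref{Hardy_1}.} I would split $Y\in(-1,1)$ into the bulk region $I_{\mathrm{bulk}}=\{Y:1-|Y|\geq\sigma\sqrt{\varepsilon}\}$ and two Prandtl-scale strips of thickness $\sigma\sqrt{\varepsilon}$ adjacent to $Y=\pm 1$. The nondegeneracy \eqref{Prandtl_growth} together with the profile construction in Section~\ref{sec_app} gives the quantitative lower bound $u_a(X,Y)\gtrsim\min\{(1-|Y|)/\sqrt{\varepsilon},\,1\}$, so $u_a\gtrsim \min\{\sigma,1\}$ on $I_{\mathrm{bulk}}$ and $u_a\gtrsim \eta/\sqrt{\varepsilon}$ on the strip near $Y=-1$ (writing $\eta=Y+1$). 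The bulk contribution is then handled pointwise by $|F|\lesssim \sigma^{-1}|u_a F|$, already supplying the $\sigma^{-1}\|u_a F\|_2$ term. On the strip I would integrate by parts against $v=\eta$ to obtain
\begin{align*}
\int_0^{\sigma\sqrt{\varepsilon}} F^2\,d\eta = \sigma\sqrt{\varepsilon}\,F^2\bigl(X,-1+\sigma\sqrt{\varepsilon}\bigr) - 2\int_0^{\sigma\sqrt{\varepsilon}} \eta F F_Y\,d\eta,
\end{align*}
apply Cauchy--Schwarz with the weight $\sqrt{u_a}$ together with the pointwise bound $\eta^2/u_a\lesssim\sqrt{\varepsilon}\,\eta\leq \sigma\varepsilon$ on the strip, and absorb half of $\|F\|_{L^2}^2$ via Young's inequality. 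The leftover trace $\sigma\sqrt{\varepsilon}\,F^2(X,-1+\sigma\sqrt{\varepsilon})$ is estimated by a one-dimensional slab inequality on the adjacent buffer $(-1+\sigma\sqrt{\varepsilon},-1+2\sigma\sqrt{\varepsilon})$, where $u_a\sim\sigma$; after integration in $X$ the trace converts into $\sigma^{-2}\|u_a F\|_2^2+\sigma\varepsilon\|\sqrt{u_a}F_Y\|_2^2$. A symmetric treatment at $Y=+1$ and taking square roots yields \eqref{Hardy_1}.

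\emph{Plan for \eqref{Hardy_2}.} I would integrate by parts in $X$ against $-\partial_X(1/X)=1/X^2$:
\begin{align*}
\int_{-L}^0 \frac{u_a G^2}{X^2}\,dX = \Bigl[-\tfrac{u_a G^2}{X}\Bigr]_{-L}^0 + \int_{-L}^0 \frac{u_{aX}G^2 + 2u_a G G_X}{X}\,dX.
\end{align*}
The boundary term at $X=0$ vanishes because $G(0,Y)=0$ and $G\in H^1$, while at $X=-L$ it is nonpositive and is dropped. Cauchy--Schwarz and Young's inequality applied to the gradient cross term extract $\tfrac12\|\sqrt{u_a}G/X\|_2^2+2\|\sqrt{u_a}G_X\|_2^2$, the first half of which absorbs back into the left-hand side. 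The delicate piece is the correction $\int u_{aX}G^2/X\,dX$; the crucial structural observation is that $u_{aX}|_{Y=\pm 1}=0$, obtained by differentiating the Prandtl compatibility condition $u_p^0(X,0)=-u_e^0(X,\pm 1)$ in $X$. Combined with the expansions in \eqref{u_a_property}, this upgrades to the global pointwise comparison $|u_{aX}|\lesssim u_a$ on $\Omega$, hence
\begin{align*}
\left|\int u_{aX}G^2/X\,dX\right|\lesssim \int u_a G^2/|X|\,dX \leq L\,\|\sqrt{u_a}G/X\|_2^2,
\end{align*}
which absorbs into the left-hand side once $L$ times the implicit constant is sufficiently small (or via an $X$-localization for larger $L$ in the regime of Theorem \ref{Thm2}).

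The main obstacle is this last correction term in \eqref{Hardy_2}: a naive Cauchy--Schwarz would require $\|u_{aX}/\sqrt{u_a}\|_\infty<\infty$, which fails because $u_{aX}=O(1)$ in the Euler region while $\sqrt{u_a}$ vanishes at $Y=\pm 1$. Exploiting the Prandtl matching identity at the walls to upgrade to the pointwise bound $|u_{aX}|\lesssim u_a$ is what makes the absorption go through. For \eqref{Hardy_1}, the slab trace inequality at the matching interface is routine but must be executed in the Prandtl scaling so that the powers of $\varepsilon$ and $\sigma$ land correctly.
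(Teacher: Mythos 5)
Your proof of \eqref{Hardy_1} is essentially the paper's argument: both split $Y$ into a bulk region (where $u_a\gtrsim\sigma$ gives the $\sigma^{-1}\|u_aF\|_2$ term) and Prandtl-scale strips near $Y=\pm 1$, then integrate by parts against $Y\pm 1$ on the strips. The paper uses a smooth cut-off $\chi\left(\tfrac{Y+1}{\sigma\sqrt{\varepsilon}}\right)$, which means the trace contribution you compute explicitly is replaced by an interior $\chi'$-term with the same scaling; this is a cosmetic difference, and your buffer/trace bookkeeping lands on the correct powers.

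Your proof of \eqref{Hardy_2} takes a genuinely different route, and it has a real gap. The paper's argument is to choose a comparison weight $\widetilde{u}_a(Y)$ depending only on $Y$ with $\widetilde{u}_a\lesssim u_a\lesssim\widetilde{u}_a$ (possible by \eqref{Prandtl_growth}), after which the weight is constant in $X$ and the ordinary one-dimensional Hardy inequality on $(-L,0)$ applies for each fixed $Y$ — no integration by parts against $1/X^2$, no correction term, and the bound holds for every $L>0$. You instead integrate by parts with the $X$-dependent weight, producing the correction $\int u_{aX}G^2/X$. Your observation that $u_{aX}|_{Y=\pm1}=0$ (by differentiating the Prandtl matching condition in $X$) and hence $|u_{aX}|\lesssim u_a$ is correct and clever, but it only buys you $\bigl|\int u_{aX}G^2/X\bigr|\lesssim L\,\|\sqrt{u_a}G/X\|_2^2$, which is absorbable only when $L$ is small. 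The lemma is used in Lemma \ref{lem3} and Theorem \ref{Thm2}, where $L$ is arbitrary, so this restriction is not admissible. Your parenthetical ``$X$-localization for larger $L$'' is not carried out, and if you try it you will find that controlling the far-from-outlet piece requires exactly a $Y$-only upper comparison for $u_a$ — at which point you have reconstructed the paper's device but with extra work. The cleaner move is to adopt the $\widetilde{u}_a(Y)$ substitution from the start.
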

\begin{proof}
First we prove \eqref{Hardy_1}. Let $\chi(Y)$ be a smooth cut-off function supported in $[0,1]$ and $\chi \equiv 1$ in $[0,1/2]$. We write
\begin{align*}
\|F\|_{2}^2 =& \|F \chi \left( \frac{Y+1}{\sigma\sqrt{\varepsilon}}\right) \|_{2}^2 + \|F \chi \left( \frac{-Y+1}{\sigma\sqrt{\varepsilon}}\right) \|_{2}^2\\
& + \|F \left[ 1 - \chi \left( \frac{Y+1}{\sigma\sqrt{\varepsilon}}\right) - \chi \left( \frac{-Y+1}{\sigma\sqrt{\varepsilon}}\right) \right] \|_{2}^2.
\end{align*}
For the first term, we estimate
\begin{align*}
\|F \chi \left( \frac{Y+1}{\sigma\sqrt{\varepsilon}}\right) \|_{2}^2 & = (\partial_Y\{Y+1\}, F^2 \chi^2 \left( \frac{Y+1}{\sigma\sqrt{\varepsilon}}\right) )\\
&= - 2((Y+1)F_Y, F \chi^2) - 2 (  \frac{Y+1}{\sigma\sqrt{\varepsilon}}, F^2 \chi' \chi )\\
& \lesssim \| (Y+1)F_Y \chi\|_2^2 + \| \left( \frac{Y+1}{\sigma\sqrt{\varepsilon}}\right) F \chi' \|_2^2\\
& \lesssim \varepsilon \sigma \|\sqrt{u_a} F_Y\|_2^2 + \frac{1}{\sigma^2} \|u_a F\|_2^2
\end{align*}
since $u_a \sim \frac{Y+1}{\sqrt{\varepsilon}}$ in the support of $\chi \left( \frac{Y+1}{\sigma\sqrt{\varepsilon}}\right)$. The second term $\|F \chi \left( \frac{-Y+1}{\sigma\sqrt{\varepsilon}}\right) \|_{2}^2$ can be estimated in the same way. For the third term, since $u_{aY} \gtrsim \varepsilon^{-1/2}$ when $Y < -1 + \sigma \sqrt{\varepsilon}$, and $u_{aY} \lesssim -\varepsilon^{-1/2}$ when $Y > 1 - \sigma \sqrt{\varepsilon}$, we have
$$
u_a \gtrsim \sigma \quad \mbox{when}~ Y \in (-1 + \frac{1}{2}\sigma \sqrt{\varepsilon} < Y < 1 - \frac{1}{2}\sigma \sqrt{\varepsilon} ).
$$
Therefore,
\begin{align*}
&\|F \left[ 1 - \chi \left( \frac{Y+1}{\sigma\sqrt{\varepsilon}}\right) - \chi \left( \frac{-Y+1}{\sigma\sqrt{\varepsilon}}\right) \right] \|_{2}^2\\ 
=& \|\frac{u_a}{u_a} F \left[ 1 - \chi \left( \frac{Y+1}{\sigma\sqrt{\varepsilon}}\right) - \chi \left( \frac{-Y+1}{\sigma\sqrt{\varepsilon}}\right) \right] \|_{2}^2\\
\lesssim& \frac{1}{\sigma^2} \|u_a F\|_2^2.
\end{align*}
For \eqref{Hardy_2}, we fix a function $\widetilde u_a$ that is a function of $Y$ only, and such that $\widetilde u_a (Y) \lesssim u_a(X,Y) \lesssim \widetilde u_a(Y)$ for all $-L < X < 0, -1 < Y < 1$. This is possible due to \eqref{Prandtl_growth}. Then by the Hardy's inequality in $X$ variable, we have
$$
\| \sqrt{u_a} \frac{G}{X} \|_2 \lesssim \| \sqrt{\widetilde u_a} \frac{G}{X} \|_2 \lesssim \| \sqrt{\widetilde u_a} G_X \|_2 \lesssim \| \sqrt{u_a} G_X \|_2.
$$
This concludes the proof.
\end{proof}
Next we prove a basic energy estimate.
\begin{lemma}\label{lem_1}
Let $q$ be a solution of \eqref{linear_quotient}. Assume that $\varepsilon \ll 1$ and $\|h\|_\infty \ll 1$, then we have
\begin{equation}\label{energy_1}
\varepsilon \left( \| \sqrt{u_a} q_{XX}\|_2^2 +\| \sqrt{u_a} q_{XY}\|_2^2 + \| \sqrt{u_a} q_{YY}\|_2^2 \right) \lesssim \| u_a q_{X}\|_2^2 + \| u_a q_{Y}\|_2^2 + \|f\|_2^2.
\end{equation}
\end{lemma}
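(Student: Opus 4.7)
The plan is to test the quotient equation \eqref{linear_quotient} against the multiplier $q$ itself and integrate over $\Omega$, exploiting the boundary conditions $q|_{\partial\Omega}=0$, $q_X|_{X=-L,0}=0$, together with the degeneracy $u_a|_{Y=\pm1}=0$. The divergence-form contribution $\int[(u_a^2 q_X)_{XX}+(u_a^2 q_Y)_{XY}]q$ collapses after two integrations by parts to $-\int u_a u_{aX}(q_X^2+q_Y^2)$, a manifestly lower-order quantity controllable by $\|u_a\nabla q\|_2^2$ plus a small multiple of $\varepsilon\|\sqrt{u_a}\nabla^2 q\|_2^2$ via the Hardy inequality \eqref{Hardy_1} with a small parameter $\sigma$.

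The positive energy comes from the bilaplacian. Two applications of Green's identity yield
\[
-\varepsilon\int_\Omega\Delta^2\phi\cdot q = -\varepsilon\int_\Omega\Delta\phi\,\Delta q - \varepsilon\int_{\partial\Omega}\Delta\phi\,\partial_\nu q\,dS;
\]
since $\phi|_{Y=\pm1}\equiv 0$ in $X$ forces $\phi_{XX}|_{Y=\pm1}=0$, and $\phi=u_a q$ with $u_a|_{Y=\pm1}=0$ gives $\phi_{YY}|_{Y=\pm1}=2u_{aY}q_Y|_{Y=\pm1}$, the boundary contribution equals $-2\varepsilon\int\bigl[|u_{aY}|q_Y^2\bigr]_{Y=\pm1}dX$, which has a favorable sign once moved to the left-hand side. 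Expanding $\Delta\phi = u_a\Delta q + 2\nabla u_a\cdot\nabla q + q\Delta u_a$ isolates the principal piece $\varepsilon\int u_a(\Delta q)^2$, and a further IBP in both variables (boundary terms vanish via $u_a|_{Y=\pm1}=0$ and $q_X|_{X=-L,0}=0$) gives $\int u_a(\Delta q)^2=\|\sqrt{u_a}\nabla^2 q\|_2^2+2\int u_{aY}q_Xq_{XY}-2\int u_{aX}q_{YY}q_X$. The first cross term, using $q_X|_{Y=\pm1}=0$, reduces to $-\varepsilon\int u_{aYY}q_X^2$, which by $|u_{aYY}|\lesssim\varepsilon^{-1}$ (from \eqref{u_a_property}) and \eqref{Hardy_1} contributes at most a small multiple of $\varepsilon\|\sqrt{u_a}q_{XY}\|_2^2$ plus $\|u_a q_X\|_2^2$; the second cross term is handled analogously after one further IBP in $Y$.

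The term $\int J[\phi]q$ contains the most delicate piece $-\int\Delta v_s\,\phi_Y q$, for which $\Delta v_s$ is dominated near $X=0$ by $\Delta v_r\sim\varepsilon^{-2}h(Y)\exp(Xu_e^0/\varepsilon)$. Substituting $\phi_Y=u_{aY}q+u_a q_Y$ and writing $\Delta v_r=(X\Delta v_r)/X$ with $\|X\Delta v_r\|_\infty\lesssim\varepsilon^{-1}\|h\|_\infty$ (from \eqref{u_r_property}), the Hardy inequality \eqref{Hardy_2} applied to $q/X$ (valid since $q|_{X=0}=0$) bounds this piece by a constant multiple of $\|h\|_\infty\bigl(\|u_a q_X\|_2^2+\|u_a q_Y\|_2^2+\varepsilon\|\sqrt{u_a}\nabla^2 q\|_2^2\bigr)$, absorbed into the LHS by the smallness of $\|h\|_\infty$. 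The remaining entries in $J[\phi]$ feature only mildly singular coefficients and are bounded by Cauchy--Schwarz together with \eqref{Hardy_1}; the forcing $|\int fq|\le\|f\|_2\|q\|_2$ is closed by one more application of \eqref{Hardy_1}.

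The main obstacle is the compound singular size: derivatives of $u_a$ up to order $\varepsilon^{-1}$ compete with $\Delta v_r$ of order $\varepsilon^{-2}$. Overcoming it requires the simultaneous use of both Hardy inequalities, the weight $X$ (which tames $X\Delta v_r$ to size $\varepsilon^{-1}$), and the smallness of $\|h\|_\infty$. With Young's inequality applied with a suitably small $\sigma$, every bad term is absorbed into the LHS and the estimate \eqref{energy_1} follows.
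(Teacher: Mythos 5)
Your overall strategy---testing \eqref{linear_quotient} against $\pm q$, extracting $\varepsilon\|\sqrt{u_a}\nabla^2 q\|_2^2$ as the coercive piece from the bi-Laplacian, controlling the Rayleigh and milder $J[\phi]$ entries via Hardy's inequality, and absorbing the singular $\Delta v_s$ contribution using the $X$ weight and the smallness of $\|h\|_\infty$---is the right one and matches the paper's. The Rayleigh and bi-Laplacian computations, modulo inessential arithmetic (e.g.\ the coefficient of the boundary term $\varepsilon u_{aY}q_Y^2|_{Y=\pm1}$ differs slightly depending on whether one distributes via Green's identity or works directly on $\phi_{YYYY}$), lead to the same favorable terms.

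However, there is a genuine gap in your treatment of the most singular piece $\int\Delta v_r\,\phi_Y\,q$. You propose writing $\Delta v_r=(X\Delta v_r)/X$ and invoking Hardy on $q/X$, using $\|X\Delta v_r\|_\infty\lesssim\varepsilon^{-1}\|h\|_\infty$. But this leaves the bound
\[
\Bigabs{\int\Delta v_r\,\phi_Y\,q}\lesssim \|X\Delta v_r\|_\infty\,\|\phi_Y\|_2\,\Bignorm{\tfrac{q}{X}}_2\lesssim \varepsilon^{-1}\|h\|_\infty\,\|q_Y\|_2\,\|q_X\|_2,
\]
and the extra factor $\varepsilon^{-1}$ is fatal: after converting $\|q_X\|_2^2+\|q_Y\|_2^2$ via \eqref{Hardy_1} with parameter $\sigma$, one would need $\sigma\|h\|_\infty\lesssim\varepsilon$ to absorb into $\varepsilon\|\sqrt{u_a}\nabla^2 q\|_2^2$, and then the companion term $\frac{\|h\|_\infty}{\varepsilon\sigma^2}\|u_a\nabla q\|_2^2$ blows up as $\varepsilon\to 0$. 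Your claimed bound $\|h\|_\infty\bigl(\|u_aq_X\|_2^2+\|u_aq_Y\|_2^2+\varepsilon\|\sqrt{u_a}\nabla^2 q\|_2^2\bigr)$ is therefore off by $\varepsilon^{-1}$. The missing step is an integration by parts in $X$ \emph{before} estimating, to lower the order of the $X$-derivative on $v_r$. The paper writes $\int v_{rXX}\phi_Y\,q=-\int v_{rX}\phi_{XY}\,q-\int v_{rX}\phi_Y\,q_X$, isolates the crucial term $(v_{rX}\phi_X,q_Y)$ after a further IBP in $Y$, and then expands $\phi_X=u_{aX}q+u_aq_X$ so that the $X^2$ weight and \eqref{Hardy_2} (applied to both $q_X$ and $q_Y$ with the $\sqrt{u_a}$ weight) yield
\[
\|X^2 v_{rX}\|_\infty\,\Bignorm{\sqrt{u_a}\tfrac{q_X}{X}}_2\,\Bignorm{\sqrt{u_a}\tfrac{q_Y}{X}}_2\lesssim\varepsilon\|h\|_\infty\,\|\sqrt{u_a}q_{XX}\|_2\,\|\sqrt{u_a}q_{XY}\|_2,
\]
using $\|X^2 v_{rX}\|_\infty\lesssim\varepsilon\|h\|_\infty$ from \eqref{u_r_property} (take $i=-1$, $j=2$). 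This carries the \emph{positive} power of $\varepsilon$ needed for absorption. Without the IBP and the $X^2$ (rather than $X$) weight, the estimate does not close. The same remark applies to the companion term $(v_r\Delta\phi_Y,-q)$, which you treat as mild, but which upon IBP also produces $v_{rX}$ against second derivatives of $q$ and must be handled by the identical mechanism.
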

\begin{proof}
We take the inner product of the equation \eqref{linear_quotient} with $-q$, and estimate each term. Since many of the terms are estimated in a similar manner, we will only point out the argument when it is first introduced.

\textbf{Step 1:} Rayleigh terms.

\begin{equation}\label{1-1}
\begin{aligned}
((u_a^2 q_{X})_{XX} , -q) &= ((u_a^2 q_{X})_{X} , q_X)\\
&= 2(u_a u_{aX} q_X, q_X) + (u_a^2 q_{XX}, q_X)\\
&= (u_a u_{aX} q_X, q_X)\\
&= O(1)\|q_X\|_2^2.
\end{aligned}
\end{equation}
Similarly, we have
\begin{equation}\label{1-2}
((u_a^2 q_{Y})_{XY} , -q) = ((u_a^2 q_{Y})_{X} , q_Y) = (u_a u_{aX} q_Y, q_Y) = O(1)\|q_Y\|_2^2.
\end{equation}

\textbf{Step 2:} $\Delta^2$ terms.

We split the Bi-Laplacian term into three parts
$$
(-\varepsilon\Delta^2 \phi, - q) = \varepsilon (\phi_{XXXX} + 2 \phi_{XXYY} + \phi_{YYYY}, q).
$$
For the first part,
\begin{align*}
\varepsilon (\phi_{XXXX}, q) &= \varepsilon (\phi_{XX}, q_{XX})\\
&=\varepsilon (u_{aXX}q + 2 u_{aX}q_X + u_a q_{XX}, q_{XX})\\
&=\varepsilon (u_a q_{XX}, q_{XX}) - 2\varepsilon(u_{aXX}q_X,q_X) - \varepsilon (u_{aXXX}q, q_X)\\
&=\varepsilon (u_a q_{XX}, q_{XX}) + O(\varepsilon)\|q_X\|_2^2,
\end{align*}
where we used the definition $\phi = u_a q$, \eqref{u_a_property} to bound $u_{aXX}$ and $u_{aXXX}$, and the Poincare inequality $\|q\|_2 \lesssim \|q_X\|_2$. For the second part,
\begin{align*}
2\varepsilon (\phi_{XXYY}, q) =& 2\varepsilon (\phi_{XY}, q_{XY})\\
=&2\varepsilon (u_{aXY}q +  u_{aX}q_Y + u_{aY} q_X + u_a q_{XY}, q_{XY})\\
=&2\varepsilon (u_a q_{XY}, q_{XY}) - \varepsilon(u_{aXX}q_Y,q_Y)- \varepsilon(u_{aYY}q_X,q_X) \\
&- 2\varepsilon (u_{aXYY}q, q_X)- 2\varepsilon (u_{aXY}q_Y, q_X)\\
=&2\varepsilon (u_a q_{XY}, q_{XY}) + O(1)\|q_X\|_2^2 + O(\varepsilon)\|q_Y\|_2^2,
\end{align*}
where we used \eqref{u_a_property} when $Y$ derivatives hit on $u_a$. For the last part, note that we do not have $q_Y = 0$ when $Y = \pm 1$,
\begin{equation}\label{1-2-1}
\begin{aligned}
\varepsilon (\phi_{YYYY}, q) =& -\varepsilon (\phi_{YYY}, q_{Y})\\
=& -\varepsilon (\phi_{YY}, q_{Y})|_{Y=1} + \varepsilon (\phi_{YY}, q_{Y})|_{Y=-1} + \varepsilon(\phi_{YY},q_{YY})\\
=& -2\varepsilon (u_{aY}q_{Y}, q_{Y})|_{Y=1} + 2\varepsilon (u_{aY}q_{Y}, q_{Y})|_{Y=-1}\\
&+\varepsilon (u_{aYY}q + 2 u_{aY}q_Y + u_a q_{YY}, q_{YY})\\
=&\varepsilon (u_a q_{YY}, q_{YY}) - 2\varepsilon(u_{aYY}q_Y,q_Y) - \varepsilon (u_{aYYY}q, q_Y)\\
&-\varepsilon (u_{aY}q_{Y}, q_{Y})|_{Y=1} + \varepsilon (u_{aY}q_{Y}, q_{Y})|_{Y=-1}
\end{aligned}
\end{equation}
Note that $u_{aY} > 0$ at $Y=-1$ and $u_{aY} < 0$ at $Y=1$, the last two terms $(\ref{1-2-1}.4)$ and $(\ref{1-2-1}.5)$ are nonnegative. $(\ref{1-2-1}.1)$ is a favorable term. To estimate the rest, we use \eqref{u_a_property} and the Hardy inequality $\|\frac{q}{\widetilde{Y}} \|_2 \lesssim \|q_Y\|_2$, we can get
\begin{align*}
&|(\ref{1-2-1}.2)| + |(\ref{1-2-1}.3)|\\
\lesssim& \|\varepsilon u_{aYY}\|_\infty\|q_Y\|_2^2 + \|\varepsilon \widetilde{Y} u_{aYYY}\|_{\infty} \|\frac{q}{\widetilde{Y}} \|_2 \|q_Y\|_2\\
=&  O(1)\|q_Y\|_2^2.
\end{align*}
Therefore,
\begin{equation}\label{1-3}
\begin{aligned}
(-\varepsilon\Delta^2 \phi, - q) =& \varepsilon (u_a q_{XX}, q_{XX}) +2\varepsilon (u_a q_{XY}, q_{XY}) + \varepsilon (u_a q_{YY}, q_{YY})\\
& -\varepsilon (u_{aY}q_{Y}, q_{Y})|_{Y=1} + \varepsilon (u_{aY}q_{Y}, q_{Y})|_{Y=-1}\\ 
&+O(1)\|q_X\|_2^2 + O(1)\|q_Y\|_2^2.
\end{aligned}
\end{equation}

\textbf{Step 3:} $J[\phi]$ terms.

Next, we estimate $(J[\phi],-q)$ term by term.
\begin{equation}\label{1-4}
\begin{aligned}
( u_r \Delta \phi_X, -q) =& (u_{rX} \phi_{XX}, q) + (u_{rY} \phi_{XY}, q) + (u_{r} \phi_{XX}, q_X) + (u_{r} \phi_{XY}, q_Y)\\
=&- (u_{rXX} \phi_{X}, q) - (u_{rX} \phi_{X}, q_X) - (u_{rXY} \phi_{Y}, q) - (u_{rY} \phi_{Y}, q_X)\\
&+ (u_{r} (u_{aXX}q + 2 u_{aX}q_X + u_a q_{XX}), q_X)\\
&+ (u_{r} (u_{aXY}q +  u_{aX}q_Y + u_{aY} q_X + u_a q_{XY}), q_X)\\
\lesssim& \|Xu_{rXX}\|_\infty \|\phi_X\|_2\|\frac{q}{X}\|_2 + \|u_{rX}\|_\infty\|\phi_X\|_2\|q_X\|_2\\
& + \|u_{rXY}\|_\infty\|\phi_Y\|_2\|q\|_2+ \|u_{rY}\|_\infty\|\phi_Y\|_2\|q_X\|_2\\
&+ \|u_ru_{aXX}\|_\infty \|q\|_2\|q_X\|_2 + \|u_ru_{aX}\|_\infty \|q_X\|_2^2\\
&+ \|(u_ru_{a})_X\|_\infty \|q_X\|_2^2 + \|u_ru_{aXY}\|_\infty \|q\|_2\|q_X\|_2\\
&+ \|u_ru_{aX}\|_\infty \|q_Y\|_2\|q_X\|_2+ \|u_ru_{aY}\|_\infty \|q_X\|_2^2 + \|(u_ru_{a})_Y\|_\infty \|q_X\|_2^2\\
=& O(1) \|q_X\|_2^2 + O(1)\|q_Y\|_2^2,
\end{aligned}
\end{equation}
where we used
\begin{equation}\label{phi_X to q_X}
\begin{aligned}
\|\phi_X\|_2 &\lesssim \|u_{aX} q\|_2+ \|u_a q_X\|_2 \lesssim \|q_X\|_2,\\
\|\phi_Y\|_2 &\lesssim \|u_{aY} q\|_2+ \|u_a q_Y\|_2 \lesssim \|\widetilde{Y} u_{aY} \|_\infty \|\frac{q}{\widetilde{Y}}\|_2 + \|u_a q_Y\|_2 \lesssim \|q_Y\|_2,
\end{aligned}
\end{equation}
and the fact that $u_r$ is compactly supported in $Y \in (-1,1)$, so its interaction with any contribution from the Prandtl corrector is small when $\varepsilon$ is small enough. More precisely, we used
\begin{align*}
&\|u_ru_{aXX}\|_\infty +  \|u_ru_{aX}\|_\infty +\|(u_ru_{a})_X\|_\infty\\
&+ \|u_ru_{aXY}\|_\infty + \|u_ru_{aY}\|_\infty + \|(u_ru_{a})_Y\|_\infty \lesssim 1.
\end{align*}
By the similar argument, we can estimate
\begin{equation}\label{1-5}
\begin{aligned}
(- \Delta u_r \phi_X, - q) =& (u_{rXX} \phi_X, q) + (u_{rYY} \phi_X, q)\\
\lesssim & \|Xu_{rXX}\|_\infty \|\phi_X\|_2\|\frac{q}{X}\|_2 +\|u_{rYY}\|_\infty \|\phi_X\|_2\|q\|_2\\
=& O(1)\|q_X\|_2^2,
\end{aligned}
\end{equation}

\begin{equation}\label{1-6}
\begin{aligned}
(- u_{aX} \Delta \phi , -q) =& (u_{aX} \phi_{XX}, q) + (u_{aX} \phi_{YY}, q)\\
=& -(u_{aXX} \phi_{X}, q) - (u_{aX} \phi_{X}, q_X) - (u_{aXY} \phi_{Y}, q) - (u_{aX} \phi_{Y}, q_Y)\\
\lesssim& \|u_{aXX}\|_\infty \|\phi_{X}\|_2 \|q\|_2 + \|u_{aX}\|_\infty \|\phi_{X}\|_2 \|q_X\|_2\\
&+ \|\widetilde{Y} u_{aXY}\|_\infty \|\phi_{Y}\|_2 \|\frac{q}{\widetilde{Y}}\|_2 + \|u_{aX}\|_\infty \|\phi_{Y}\|_2 \|q_Y\|_2\\ 
=& O(1)\|q_X\|_2^2 +  O(1)\|q_Y\|_2^2,
\end{aligned}
\end{equation}

\begin{equation}\label{1-7}
\begin{aligned}
(\Delta u_{aX} \phi, -q) =& -(u_{aXXX} \phi, q) - (u_{aXYY} \phi, q)\\
=& (u_{aXX} \phi_X, q) + (u_{aXX} \phi, q_X) + (u_{aXY} \phi_Y, q) + (u_{aXY} \phi, q_Y)\\
\lesssim& \|u_{aXX}\|_\infty \|\phi_{X}\|_2 \|q\|_2 + \|u_{aXX}\|_\infty \|\phi\|_2 \|q_X\|_2\\
&+ \|\widetilde{Y} u_{aXY}\|_\infty \|\phi_{Y}\|_2 \|\frac{q}{\widetilde{Y}}\|_2 + \|\widetilde{Y} u_{aXY}\|_\infty \|\frac{\phi}{\widetilde{Y}}\|_2 \|q_Y\|_2\\ 
=& O(1)\|q_X\|_2^2 +  O(1)\|q_Y\|_2^2.
\end{aligned}
\end{equation}
The last two terms of $J[\phi]$ contain some crucial terms. First we split
\begin{equation}\label{1-7-1}
(v_s \Delta \phi_Y , -q) = (v_a \Delta \phi_Y , -q) + (v_r \Delta \phi_Y , -q).
\end{equation}
For the first term of \eqref{1-7-1}, we can rearrange the terms as above:
\begin{equation}\label{1-8}
\begin{aligned}
(\ref{1-7-1}.1)=& -(v_a \phi_{XXY}, q) - (v_a \phi_{YYY}, q)\\
=& (v_{aX} \phi_{XY}, q) + (v_{a} \phi_{XY}, q_X) + (v_{aY} \phi_{YY}, q) + (v_{a} \phi_{YY}, q_Y)\\
=&-(v_{aXX} \phi_{Y}, q) - (v_{aX} \phi_{Y}, q_X) - (v_{aYY} \phi_{Y}, q) - (v_{aY} \phi_{Y}, q_Y)\\
&+ (v_{a}  (u_{aXY}q +  u_{aX}q_Y + u_{aY} q_X + u_a q_{XY}), q_X)\\
&+ (v_{a} (u_{aYY}q + 2 u_{aY}q_Y + u_a q_{YY}), q_Y).
\end{aligned}
\end{equation}
Similar as before, we can estimate
\begin{align*}
&|(\ref{1-8}.1)| + |(\ref{1-8}.2)| + |(\ref{1-8}.3)| + |(\ref{1-8}.4)|\\
\lesssim& \|v_{aXX}\|_\infty \|\phi_{Y}\|_2 \|q\|_2 + \|v_{aX}\|_\infty \|\phi_Y\|_2 \|q_X\|_2\\
&+  \|\widetilde{Y} v_{aYY}\|_\infty \|\phi_{Y}\|_2 \|\frac{q}{\widetilde{Y}}\|_2 +  \|v_{aY}\|_\infty \|\phi_Y\|_2 \|q_Y\|_2\\ 
=& O(1)\|q_X\|_2^2 +  O(1)\|q_Y\|_2^2.
\end{align*}
To estimate (\ref{1-8}.5) and (\ref{1-8}.6), we first note that by \eqref{assumption2} and \eqref{u_a_property}, 
$$
\|\frac{v_a}{\widetilde{Y}} \|_\infty \le  \|\frac{v_e^0}{\widetilde{Y}} \|_\infty + \|\frac{v_p}{\widetilde{Y}} \|_\infty \lesssim 1.
$$
Then
\begin{align*}
|(\ref{1-8}.5)| =& |(v_{a} u_{aXY}q , q_X) + (v_{a} u_{aX}q_Y , q_X)\\ 
&+ (v_{a} u_{aY} q_X, q_X) - \frac{1}{2} ((v_{a}u_a)_Y q_{X}), q_X)|\\
\lesssim& \|v_a\|_\infty \|\widetilde{Y} u_{aXY}\|_\infty  \|\frac{q}{\widetilde{Y}}\|_2\|q_{Y}\|_2 + \|v_{a}u_{aX}\|_\infty \|q_Y\|_2 \|q_X\|_2\\
&+ \|\frac{v_a}{\widetilde{Y}} \|_\infty \|\widetilde{Y} u_{aY}\|_\infty \|q_X\|_2^2 + (\|v_{aY}u_a\|_\infty +  \|\frac{v_a}{\widetilde{Y}} \|_\infty \|\widetilde{Y} u_{aY}\|_\infty )\|q_X\|_2^2\\
=& O(1)\|q_X\|_2^2 +  O(1)\|q_Y\|_2^2.
\end{align*}
Similarly,
\begin{align*}
|(\ref{1-8}.6)| =& |(v_{a}u_{aYY}q , q_Y) + 2 (v_{a} u_{aY}q_Y , q_Y) -\frac{1}{2} ((v_{a}u_a)_Y q_{Y}), q_Y)|\\
\lesssim& \|\frac{v_a}{\widetilde{Y}} \|_\infty \|\widetilde{Y}^2 u_{aYY}\|_\infty  \|\frac{q}{\widetilde{Y}}\|_2\|q_{Y}\|_2\\
&+ (\|v_{aY}u_a\|_\infty +  \|\frac{v_a}{\widetilde{Y}} \|_\infty \|\widetilde{Y} u_{aY}\|_\infty )\|q_Y\|_2^2\\
=& O(1)\|q_Y\|_2^2.
\end{align*}
For the second term of \eqref{1-7-1}, we have
\begin{equation}\label{1-8-1}
\begin{aligned}
(\ref{1-7-1}.2)=& -(v_r \phi_{XXY}, q) - (v_r \phi_{YYY}, q)\\
=& (v_{rX} \phi_{XY}, q) + (v_{r} \phi_{XY}, q_X) + (v_{rY} \phi_{YY}, q) + (v_{r} \phi_{YY}, q_Y)\\
=&-(v_{rXY} \phi_{X}, q) - (v_{rX} \phi_{X}, q_Y) - (v_{rYY} \phi_{Y}, q) - (v_{rY} \phi_{Y}, q_Y)\\
&+ (v_{r}  (u_{aXY}q +  u_{aX}q_Y + u_{aY} q_X + u_a q_{XY}), q_X)\\
&+ (v_{r} (u_{aYY}q + 2 u_{aY}q_Y + u_a q_{YY}), q_Y).
\end{aligned}
\end{equation}
The crucial term here is $(\ref{1-8-1}.2)$, as the rest can be controlled by $\|q_X\|_2^2 + \|q_Y\|_2^2$ by the similar argument above. For this crucial term, we will use \eqref{u_r_property} and \eqref{Hardy_2} with $G = q_X$ and $q_Y$ to estimate
\begin{align*}
|(\ref{1-8-1}.2)| =& |(v_{rX} u_{aX}q , q_Y) + (v_{rX} u_a q_X, q_Y)|\\
\lesssim& \|Xv_{rX}\|_{\infty} \|u_{aX}\|_\infty \| \frac{q}{X} \|_2 \|q_Y\|_2  + \|X^2v_{rX}\|_\infty \|\sqrt{u_a} \frac{q_X}{X}\|_2 \|\sqrt{u_a} \frac{q_Y}{X}\|_2\\
=&O(1)\|q_X\|_2\|q_Y\|_2 + O(\|h\|_\infty) \varepsilon \|\sqrt{u_a} q_{XX}\|_2\|\sqrt{u_a} q_{XY}\|_2.
\end{align*}
For other terms, we can estimate similarly as before:
\begin{align*}
&|(\ref{1-8-1}.1)|+|(\ref{1-8-1}.3)|+|(\ref{1-8-1}.4)|\\
\lesssim&  \|Xv_{rXY}\|_{\infty} \| \phi_X \|_2 \| \frac{q}{X} \|_2 +  \|v_{rYY}\|_{\infty} \| \phi_Y \|_2 \| q \|_2 + \|v_{rY}\|_{\infty} \| \phi_Y \|_2 \| q_Y \|_2\\
=& O(1)\|q_X\|_2^2 +  O(1)\|q_Y\|_2^2.
\end{align*}
For the last two terms of \eqref{1-8-1}, we use the fact that $v_r$ is compactly supported in $Y \in (-1,1)$, so its interaction with any contribution from the Prandtl corrector is small when $\varepsilon$ is small enough. We have
$$
(\ref{1-8-1}.5) + (\ref{1-8-1}.6) = O(1)\|q_X\|_2^2 +  O(1)\|q_Y\|_2^2.
$$
Therefore,
\begin{equation}\label{1-9}
(\ref{1-7-1}.2) = O(1)\|q_X\|_2^2 +  O(1)\|q_Y\|_2^2 + O(\|h\|_\infty) \varepsilon \|\sqrt{u_a} q_{XX}\|_2^2 + O(\|h\|_\infty) \varepsilon \|\sqrt{u_a} q_{XY}\|_2^2.
\end{equation}
Finally, we estimate
\begin{align}\label{1-9-1}
(- \Delta v_s \phi_Y, -q) =& (\Delta v_a \phi_Y, q) + (\Delta v_r \phi_Y, q).
\end{align}
Estimate for $(\ref{1-9-1}.1)$ is straightforward:
\begin{align*}
|(\ref{1-9-1}.1)| =& |( v_{aXX} \phi_Y, q) + ( v_{aYY} \phi_Y, q)|\\
\lesssim& \|v_{aXX}\|_{\infty} \| \phi_Y \|_2 \| q \|_2 + \|\widetilde{Y} v_{aYY}\|_\infty \|\phi_{Y}\|_2 \|\frac{q}{\widetilde{Y}}\|_2\\
=&O(1)\|q_Y\|_2^2.
\end{align*}
For $(\ref{1-9-1}.2)$, we have
\begin{align*}
|(\ref{1-9-1}.2)| =& |( v_{rXX} \phi_Y, q) + ( v_{rYY} \phi_Y, q)| \\
=&  |( v_{rX} \phi_{XY}, q) + ( v_{rX} \phi_Y, q_X)| +  O(1)\|q_Y\|_2^2\\
\le & |( v_{rXY} \phi_{X}, q)| + |( v_{rX} \phi_{X}, q_Y )| +|( v_{rX} \phi_Y, q_X)| +  O(1)\|q_Y\|_2^2\\
\lesssim & \|Xv_{rXY}\|_{\infty} \|\phi_{X}\|_2 \|\frac{q}{X}\|_2 + |( v_{rX} \phi_{X}, q_Y )| +|( v_{rX} \phi_Y, q_X)|+  O(1)\|q_Y\|_2^2.
\end{align*}
$( v_{rX} \phi_{X}, q_Y )$ is exactly $(\ref{1-8-1}.2)$ which has been estimated above, and the term $( v_{rX} \phi_Y, q_X)$ can be handled similarly. Therefore,
\begin{equation}\label{1-10}
\begin{aligned}
(- \Delta v_s \phi_Y, -q) =&O(1)\|q_X\|_2^2 +  O(1)\|q_Y\|_2^2\\
& + O(\|h\|_\infty) \varepsilon \|\sqrt{u_a} q_{XX}\|_2^2 + O(\|h\|_\infty) \varepsilon \|\sqrt{u_a} q_{XY}\|_2^2.
\end{aligned}
\end{equation}
Now collecting all terms above, and using \eqref{Hardy_1} with $F = q_X$ and $q_Y$, we have
\begin{align*}
&\varepsilon (u_a q_{XX}, q_{XX}) + 2\varepsilon (u_a q_{XY}, q_{XY}) + \varepsilon (u_a q_{YY}, q_{YY}) \\
&-\varepsilon (u_{aY}q_{Y}, q_{Y})|_{Y=1} + \varepsilon (u_{aY}q_{Y}, q_{Y})|_{Y=-1}\\
\lesssim &\|q_X\|_2^2 + \|q_Y\|_2^2 + \varepsilon \|h\|_\infty\|\sqrt{u_a} q_{XX}\|_2^2  + \varepsilon \|h\|_\infty\|\sqrt{u_a} q_{XY}\|_2^2 + \|f\|_2^2\\
\lesssim & \sigma \varepsilon \|\sqrt{u_a} q_{XY}\|_2^2 + \frac{1}{\sigma^2}\|u_a q_X\|_2^2 + \sigma \varepsilon \|\sqrt{u_a} q_{YY}\|_2^2 + \frac{1}{\sigma^2}\|u_a q_Y\|_2^2\\
&+ \varepsilon \|h\|_\infty\|\sqrt{u_a} q_{XX}\|_2^2  + \varepsilon \|h\|_\infty\|\sqrt{u_a} q_{XY}\|_2^2 + \|f\|_2^2.
\end{align*}
Since $\|h\|_\infty \ll 1$, by choosing $\sigma$ small enough, we have proved \eqref{energy_1}.
\end{proof}

Next we will close the estimate \eqref{energy_1} by testing the equation \eqref{linear_quotient} by $qX$. We would like to re-emphasis that the weight $X$ interacts favorably with the profile $(u_r,v_r)$, as it cancels one derivative of $(u_r,v_r)$ in $X$. We first consider the case when $L \ll 1$.

\begin{lemma}\label{lem2}
Let $q$ be a solution of \eqref{linear_quotient}. Assume that $\varepsilon \ll L \ll1$ and $\|h\|_\infty \ll 1$, we have
\begin{equation}\label{energy_2}
\begin{aligned}
&\|u_a q_X\|_2^2 + \|u_a q_Y\|_2^2\\ 
\lesssim& (L + \sqrt{\varepsilon} + \|h\|_\infty)^{\frac{4}{3}} \varepsilon \left( \| \sqrt{u_a} q_{XX}\|_2^2 +\| \sqrt{u_a} q_{XY}\|_2^2 + \| \sqrt{u_a} q_{YY}\|_2^2 \right) + \|f\|_2^2.
\end{aligned}
\end{equation}
\end{lemma}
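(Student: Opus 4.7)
The plan is to multiply equation \eqref{linear_quotient} by the weighted function $Xq$ and integrate over $\Omega$. The weight $X$ (which is nonpositive in $\Omega$) is chosen for two reasons: the degenerate Rayleigh operators $(u_a^2q_X)_{XX}$ and $(u_a^2q_Y)_{XY}$ integrate by parts against $Xq$ to produce precisely the coercive combination $\|u_a q_X\|_2^2+\|u_a q_Y\|_2^2$, and by \eqref{u_r_property}--\eqref{u_r_property2} multiplication by $X$ cancels one power of $\varepsilon^{-1}$ from any $X$-derivative of $(u_r,v_r)$, so that singular quantities like $Xv_{rX}$ become of size $\|h\|_\infty$ rather than $\varepsilon^{-1}$. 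Using the boundary conditions $q|_{X=-L,0}=q_X|_{X=-L,0}=0$ and $q|_{Y=\pm 1}=0$, two integrations by parts give
$$\bigl((u_a^2q_X)_{XX}+(u_a^2q_Y)_{XY},\,Xq\bigr)=\tfrac{3}{2}\|u_a q_X\|_2^2+\tfrac{1}{2}\|u_a q_Y\|_2^2-\bigl(u_au_{aX}X,\,q_X^2+q_Y^2\bigr),$$
and the last integral is absorbable as $O(L)(\|u_a q_X\|_2^2+\|u_a q_Y\|_2^2)$ after invoking \eqref{Hardy_1}.

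For the biharmonic term, since $Xq$ vanishes on $\partial\Omega$ but $\partial_\nu(Xq)|_{Y=\pm 1}=\pm X q_Y$ does not, integration by parts yields
$$-\varepsilon(\Delta^2\phi,\,Xq)=-\varepsilon(\Delta\phi,\,\Delta(Xq))+2\varepsilon\!\int_{-L}^0\! X\bigl[u_{aY}q_Y^2\bigr|_{Y=1}-u_{aY}q_Y^2\bigr|_{Y=-1}\bigr]dX,$$
using $\Delta\phi|_{Y=\pm 1}=2u_{aY}q_Y$. Both the boundary integral (by the signs of $X$ and $u_{aY}|_{Y=\pm 1}$) and the leading body term $\varepsilon\int u_a(-X)(\Delta q)^2$ are nonnegative and can be discarded. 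Expanding $\Delta\phi=\Delta u_a\,q+2\nabla u_a\!\cdot\!\nabla q+u_a\Delta q$ and $\Delta(Xq)=2q_X+X\Delta q$, all remaining cross terms are handled via \eqref{u_a_property} and \eqref{Hardy_1}--\eqref{Hardy_2}, producing a net contribution of $O(L+\sqrt\varepsilon)\,\varepsilon\,(\|\sqrt{u_a}q_{XX}\|_2^2+\|\sqrt{u_a}q_{XY}\|_2^2+\|\sqrt{u_a}q_{YY}\|_2^2)$ plus absorbable pieces. The $J[\phi]$ terms are treated exactly as in Lemma \ref{lem_1} but with the extra weight $X$: for the $u_a,v_a$ pieces this yields an additional $L$ factor, while for each $u_r,v_r$ piece the dangerous quantities $v_{rX}\sim\varepsilon^{-1}$ become $Xv_{rX}\lesssim\|h\|_\infty$ by \eqref{u_r_property}, producing at most $\|h\|_\infty\,\varepsilon\|\sqrt{u_a}\nabla^2q\|_2^2+\|h\|_\infty(\|u_a q_X\|_2^2+\|u_a q_Y\|_2^2)$.

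For the forcing $(f,Xq)$, Cauchy--Schwarz and Poincar\'e in $X$ give $\|Xq\|_2\lesssim L\|q\|_2$. Applying \eqref{Hardy_1} to $F=q$ with parameter $\sigma$, the weighted Poincar\'e estimate $\|\sqrt{u_a}q_Y\|_2\lesssim L\|\sqrt{u_a}q_{XY}\|_2$ (valid because $q_Y|_{X=-L,0}=0$), together with the standard Poincar\'e $\|u_a q\|_2\lesssim L(\|q\|_2+\|u_a q_X\|_2)$ absorbed for $L/\sigma\ll 1$, produces
$$\|Xq\|_2\lesssim L^2\sqrt{\varepsilon\sigma}\,\|\sqrt{u_a}q_{XY}\|_2+\tfrac{L^2}{\sigma}\|u_a q_X\|_2.$$
Then $|(f,Xq)|\le\|f\|_2^2+L^4\varepsilon\sigma\,\|\sqrt{u_a}q_{XY}\|_2^2+(L^4/\sigma^2)\|u_a q_X\|_2^2$, and selecting $\sigma$ to balance the two weighted terms against the target factor $(L+\sqrt\varepsilon+\|h\|_\infty)^{4/3}\varepsilon$ (the exponent $4/3$ emerging from Young's inequality with conjugate pair $(3/2,3)$) yields the desired control, with the last term being far subcritical and absorbable.

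Combining all four contributions and absorbing the $O(L+\|h\|_\infty)(\|u_a q_X\|_2^2+\|u_a q_Y\|_2^2)$ pieces into the left-hand side, which is permissible since $L,\|h\|_\infty\ll 1$, produces \eqref{energy_2}. The main technical obstacles are the bookkeeping of the many cross terms produced by $-\varepsilon(\Delta\phi,\Delta(Xq))$---each requiring a tailored partition of the weight $u_a$ between its factors through \eqref{u_a_property} and \eqref{Hardy_1}--\eqref{Hardy_2}---and, most importantly, verifying that in every $u_r,v_r$ contribution to $J[\phi]$ an integration-by-parts configuration exists in which the weight $X$ does kill exactly one $\varepsilon^{-1}$ arising from an $X$-derivative of $(u_r,v_r)$. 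This cancellation is the structural mechanism by which the $X\sim\varepsilon$ small scale of the vertical velocity $v_r$ can coexist with the degenerate Prandtl--layer energy framework.
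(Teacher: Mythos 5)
Your proposal follows essentially the same strategy as the paper: test \eqref{linear_quotient} against $qX$, exploit the sign of $X$ on the biharmonic and boundary terms, use the weight $X$ to tame one $X$-derivative of $(u_r,v_r)$, apply the Hardy inequality \eqref{Hardy_1} to $q_X$ and $q_Y$, and then choose $\sigma$. Your Green's-identity decomposition $-\varepsilon(\Delta^2\phi,Xq)=-\varepsilon(\Delta\phi,\Delta(Xq))+\text{boundary}$ is a cosmetic variant of the paper's term-by-term integration by parts ($\phi_{XXXX}$, $\phi_{XXYY}$, $\phi_{YYYY}$ separately), and ends up discarding the same nonnegative quantities. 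This part is fine.

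However, a few points in your sketch misstate how the argument actually closes. First, the exponent $4/3$ does not emerge from a Young conjugate pair $(3/2,3)$; it comes from choosing $\sigma=(L+\sqrt\varepsilon+\|h\|_\infty)^{1/3}$ in \eqref{Hardy_1}: the intermediate bound is $O(L+\sqrt\varepsilon+\|h\|_\infty)(\|q_X\|_2^2+\|q_Y\|_2^2)$, and applying \eqref{Hardy_1} to $q_X,q_Y$ gives $\sigma\varepsilon\|\sqrt{u_a}\nabla^2 q\|_2^2+\sigma^{-2}\|u_a\nabla q\|_2^2$; the prefactor $(L+\sqrt\varepsilon+\|h\|_\infty)\sigma^{-2}=(L+\sqrt\varepsilon+\|h\|_\infty)^{1/3}\ll1$ absorbs into the left, and $(L+\sqrt\varepsilon+\|h\|_\infty)\sigma=(L+\sqrt\varepsilon+\|h\|_\infty)^{4/3}$ is the reported coefficient. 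Second, and relatedly, the $J[\phi]$ and $v_r$ contributions do \emph{not} come out directly as $\|h\|_\infty\|u_a q_X\|_2^2+\|h\|_\infty\|u_a q_Y\|_2^2$ (which could be absorbed with no $\sigma$ at all); they come out as $\|h\|_\infty(\|q_X\|_2^2+\|q_Y\|_2^2)$, with the \emph{unweighted} $q_X,q_Y$---e.g.\ $(v_{rX}\phi_X,q_YX)\lesssim\|Xv_{rX}\|_\infty\|q_X\|_2\|q_Y\|_2$---and it is precisely this loss of the $u_a$-weight that forces the Hardy/$\sigma$ step and generates the $4/3$ power. If the bound were weighted, the lemma would hold with exponent $1$. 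Finally, your treatment of $(f,Xq)$ is more elaborate than needed: since $|X|\le L$, Cauchy--Schwarz and Poincar\'e in $X$ give $|(f,Xq)|\lesssim L\|f\|_2\|q_X\|_2\le\|f\|_2^2+L^2\|q_X\|_2^2$, and $L^2\|q_X\|_2^2$ joins the same $O(L+\sqrt\varepsilon+\|h\|_\infty)\|q_X\|_2^2$ bucket. None of these are fatal gaps, but they do mean your accounting of where the $4/3$ power is born, and of which norms actually appear after the $J[\phi]$ estimates, is not quite right.
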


\begin{proof}
We take inner product of the equation \eqref{linear_quotient} with $qX$, and estimate term by term. 

\textbf{Step 1:} Rayleigh terms.

\begin{equation}\label{2-1}
\begin{aligned}
((u_a^2 q_{X})_{XX} , qX) &= -((u_a^2 q_{X})_{X} , q) - ((u_a^2 q_{X})_{X} , q_{X}X)\\
&= 2(u_a^2 q_X, q_X) + (u_a^2 q_{X}, q_{XX}X)\\
&= \frac{3}{2}(u_a^2 q_X, q_X) - (u_a u_{aX} q_X, q_XX)\\
&= \frac{3}{2}(u_a^2 q_X, q_X) + O(L)\| q_X\|_2^2,
\end{aligned}
\end{equation}
where we used $|X|\le L$.
Similarly, we have
\begin{equation}\label{2-2}
\begin{aligned}
((u_a^2 q_{Y})_{XY} , qX) &= -((u_a^2 q_{Y})_{X} , q_YX)\\ 
&= - (u_a^2 q_{XY} , q_YX)- 2(u_a u_{aX} q_Y, q_YX)\\
&=  \frac{1}{2}(u_a^2 q_Y, q_Y) -(u_a u_{aX} q_Y, q_Y X)\\
&=  \frac{1}{2}(u_a^2 q_Y, q_Y) + O(L)\|q_Y\|_2^2.
\end{aligned}
\end{equation}

\textbf{Step 2:} $\Delta^2 \phi$ term.

For the Bi-Laplacian term, we split it into three parts again. For the first part,
\begin{equation}\label{2-2-0.5}
\begin{aligned}
-\varepsilon (\phi_{XXXX}, qX) =& \varepsilon (\phi_{XXX}, q) + \varepsilon (\phi_{XXX}, q_XX) \\
=&-2\varepsilon (\phi_{XX}, q_X) - \varepsilon (\phi_{XX}, q_{XX}X)\\
=&-2\varepsilon (u_{aXX}q + 2 u_{aX}q_X + u_a q_{XX}, q_{X})\\
&- \varepsilon (u_{aXX}q + 2 u_{aX}q_X + u_a q_{XX}, q_{XX}X).
\end{aligned}
\end{equation}
Since $X < 0$, the last term $- \varepsilon( u_a q_{XX}, q_{XX}X)$ is a nonnegative term. The rest can be easily controlled by $O(\varepsilon)\|q_X\|_2^2$ by the Poincare inequality $\|q\|_2 \lesssim \|q_X\|_2$.

For the second part,
\begin{equation}\label{2-2-1}
\begin{aligned}
-2\varepsilon (\phi_{XXYY}, qX) =& -2\varepsilon (\phi_{XY}, q_{XY}X)-2\varepsilon (\phi_{XY}, q_{Y})\\
=&-2\varepsilon (u_{aXY}q +  u_{aX}q_Y + u_{aY} q_X + u_a q_{XY}, q_{XY}X)\\
&-2\varepsilon (u_{aXY}q +  u_{aX}q_Y + u_{aY} q_X + u_a q_{XY}, q_{Y})\\
=&-2\varepsilon (u_a q_{XY}, q_{XY}X) +2\varepsilon (u_{aXYY} q, q_{X}X) + 2\varepsilon (u_{aXY} q_{Y}, q_{X}X)\\
&+ \varepsilon(u_{aXX}q_Y,q_Y X)+ 2\varepsilon (u_{aX} q_{Y}, q_{Y})+ \varepsilon(u_{aYY}q_X,q_XX) \\
&-2\varepsilon (u_{aXY}q +  u_{aX}q_Y + u_{aY} q_X, q_{Y}).
\end{aligned}
\end{equation}
As above, the first term $-2\varepsilon (u_a q_{XY}, q_{XY}X)$ is a favorable term.  Using \eqref{u_a_property} and $|X|\le L$, we can immediately estimate
\begin{align*}
& |(\ref{2-2-1}.3)| + |(\ref{2-2-1}.4)|+ |(\ref{2-2-1}.5)|+|(\ref{2-2-1}.6)|\\
\lesssim & \sqrt{\varepsilon}  \|\sqrt{\varepsilon} u_{aXY}\|_{\infty} \|\phi_{Y}\|_2 \|\phi_{X}\|_2 + \varepsilon \| u_{aXX}\|_{\infty} \|\phi_{Y}\|_2^2\\
 &+ \varepsilon \| u_{aX}\|_{\infty} \|\phi_{Y}\|_2^2
+ L \| \varepsilon u_{aYY}\|_{\infty} \|\phi_{X}\|_2^2\\
=& O(\sqrt{\varepsilon} + L)\|q_X\|_2^2 + O(\sqrt\varepsilon)\|q_Y\|_2^2.
\end{align*}
For the last two terms of \eqref{2-2-1}, we use the Poincare inequality and Hardy's inequality to obtain
\begin{align*}
& |(\ref{2-2-1}.2)| + |(\ref{2-2-1}.7)|\\
\lesssim & \sqrt{\varepsilon}  \|\sqrt\varepsilon \widetilde{Y} u_{aXYY}\|_{\infty} \|\frac{q}{\widetilde{Y}}\|_2\|q_X\|_2 + \sqrt{\varepsilon}  \|\sqrt{\varepsilon} u_{aXY}\|_{\infty} \|q\|_2\|q_Y\|_2\\
&+ \varepsilon \| u_{aX}\|_{\infty} \|q_Y\|_2^2 + \sqrt{\varepsilon}  \|\sqrt{\varepsilon} u_{aY}\|_{\infty} \|q_X\|_2\|q_Y\|_2\\
=& O(\sqrt{\varepsilon})\|q_X\|_2^2 + O(\sqrt\varepsilon)\|q_Y\|_2^2.
\end{align*}
For the last part, note that we do not have $q_Y = 0$ when $Y = \pm 1$,
\begin{equation}\label{2-2-2}
\begin{aligned}
-\varepsilon (\phi_{YYYY}, qX) =& \varepsilon (\phi_{YYY}, q_{Y}X)\\
=& \varepsilon (\phi_{YY}, q_{Y}X)|_{Y=1} - \varepsilon (\phi_{YY}, q_{Y}X)|_{Y=-1} - \varepsilon(\phi_{YY},q_{YY}X)\\
=& 2\varepsilon (u_{aY}q_{Y}, q_{Y}X)|_{Y=1} - 2\varepsilon (u_{aY}q_{Y}, q_{Y}X)|_{Y=-1}\\
&-\varepsilon (u_{aYY}q + 2 u_{aY}q_Y + u_a q_{YY}, q_{YY}X)\\
=&-\varepsilon (u_a q_{YY}, q_{YY}X) + 2\varepsilon(u_{aYY}q_Y,q_YX) + \varepsilon (u_{aYYY}q, q_YX)\\
&+\varepsilon (u_{aY}q_{Y}, q_{Y}X)|_{Y=1} - \varepsilon (u_{aY}q_{Y}, q_{Y}X)|_{Y=-1}.
\end{aligned}
\end{equation}
As above, the first term $-\varepsilon (u_a q_{YY}, q_{YY}X)$ is a nonnegative term. 
Since $u_{aY} > 0$ at $Y=-1$ and $u_{aY} < 0$ at $Y=1$, the last two terms $(\ref{2-2-2}.4)$ and $(\ref{2-2-2}.5)$ are nonnegative as well. $(\ref{2-2-2}.2)$ can be immediately estimated using \eqref{u_a_property} and $|X| \le L$:
$$
|(\ref{2-2-2}.2)| \lesssim L \|\varepsilon u_{aYY}\|_{\infty} \|q_Y\|_2^2 = O(L)\|q_Y\|_2^2.
$$
Finally, we use \eqref{u_a_property}, Hardy inequality, and $|X| \le L$ to estimate
$$
|(\ref{2-2-2}.3)| \lesssim L  \|\varepsilon \widetilde{Y} u_{aYYY}\|_{\infty} \|\frac{q}{\widetilde{Y}}\|_2 \|q_Y\|_2= O(L)\|q_Y\|_2^2.
$$
Therefore,
\begin{equation}\label{2-3}
\begin{aligned}
(-\varepsilon\Delta^2 \phi,  qX) =& -\varepsilon (u_a q_{XX}, q_{XX}X) -2\varepsilon (u_a q_{XY}, q_{XY}X) - \varepsilon (u_a q_{YY}, q_{YY}X)\\
& +\varepsilon (u_{aY}q_{Y}, q_{Y}X)|_{Y=1} - \varepsilon (u_{aY}q_{Y}, q_{Y}X)|_{Y=-1}\\
&+O(\sqrt{\varepsilon} + L)\|q_X\|_2^2 + O(\sqrt{\varepsilon} + L)\|q_Y\|_2^2.
\end{aligned}
\end{equation}

\textbf{Step 3:} $J[\phi]$ terms.

We estimate $(J[\phi],qX)$ term by term. First, we have
\begin{equation}\label{2-4}
\begin{aligned}
( u_r \Delta \phi_X, qX) =& ( u_r \phi_{XXX}, qX) + ( u_r \phi_{XYY}, qX)\\
=& -(u_{rX} \phi_{XX}, qX) -(u_{r} \phi_{XX}, q_XX) - (u_{r} \phi_{XX}, q)\\
& - (u_{rY} \phi_{XY}, qX)  - (u_{r} \phi_{XY}, q_YX)\\
=& (u_{rXX} \phi_{X}, qX) + (u_{rX} \phi_{X}, q_XX) + (u_{rX} \phi_{X}, q)\\
& (u_{rX} \phi_X,q) + (u_r \phi_X, q_X)  + (u_{rYY} \phi_{X}, qX) - (u_{rY} \phi_{X}, q_YX)\\
&- (u_{r} (u_{aXX}q + 2 u_{aX}q_X + u_a q_{XX}), q_XX)\\
&- (u_{r} (u_{aXY}q +  u_{aX}q_Y + u_{aY} q_X + u_a q_{XY}), q_YX).
\end{aligned}
\end{equation}
By \eqref{u_r_property} and Hardy's inequality in $X$, we can immediately estimate
$$
\sum_{n=1}^7 |(\ref{2-4}.n)| = O(\varepsilon) \|q_X\|_2^2 + O(\varepsilon)\|q_Y\|_2^2.
$$
Next, we have
\begin{align*}
|(\ref{2-4}.8)| =& |(u_{r}u_{aXX}q , q_XX) + 2 (u_{r}u_{aX}q_X , q_XX)\\
 &-\frac{1}{2} ((u_{r}u_a)_X q_{X}, q_XX) - \frac{1}{2} (u_{r}u_a q_{X}, q_X) |\\
=& O(\varepsilon) \|q_X\|_2^2.
\end{align*}
$(\ref{2-4}.9)$ can be controlled by $O(\varepsilon) \|q_X\|_2^2 + O(\varepsilon)\|q_Y\|_2^2$ in a similar manner.

Similarly, we can estimate 
\begin{equation}\label{2-5}
\begin{aligned}
(- \Delta u_r \phi_X, qX) =& -(u_{rXX} \phi_X, qX) - (u_{rYY} \phi_X, qX)\\
\lesssim & \|X^2u_{rXX}\|_\infty \|\phi_X\|_2\|\frac{q}{X}\|_2 +\|Xu_{rYY}\|_\infty \|\phi_X\|_2\|q\|_2\\
=& O(\varepsilon)\|q_X\|_2^2,
\end{aligned}
\end{equation}

\begin{equation}\label{2-6}
\begin{aligned}
(- u_{aX} \Delta \phi , qX) =& -(u_{aX} \phi_{XX}, qX) - (u_{aX} \phi_{YY}, qX)\\
=& (u_{aXX} \phi_{X}, qX) + (u_{aX} \phi_{X}, q_XX)+ (u_{aX} \phi_{X}, q)\\
& + (u_{aXY} \phi_{Y}, qX) + (u_{aX} \phi_{Y}, q_YX)\\
=& O(L)\|q_X\|_2^2 +  O(L)\|q_Y\|_2^2,
\end{aligned}
\end{equation}

\begin{equation}\label{2-7}
\begin{aligned}
(\Delta u_{aX} \phi, qX) =& (u_{aXXX} \phi, qX) + (u_{aXYY} \phi, qX)\\
=& -(u_{aXX} \phi_X, qX) - (u_{aXX} \phi, q_XX) - (u_{aXX} \phi, q)\\
& - (u_{aXY} \phi_Y, qX) - (u_{aXY} \phi, q_YX)\\
=& O(L)\|q_X\|_2^2 +  O(L)\|q_Y\|_2^2.
\end{aligned}
\end{equation}

Next, we split
\begin{equation}\label{2-7-1}
(v_s \Delta \phi_Y , qX) = (v_a \Delta \phi_Y , qX) + (v_r \Delta \phi_Y , qX).
\end{equation}
For the first term $(\ref{2-7-1}.1)$, we have
\begin{equation}\label{2-8}
\begin{aligned}
(v_a \Delta \phi_Y , qX)=& (v_a \phi_{XXY}, qX) + (v_a \phi_{YYY}, qX)\\
=& -(v_{aX} \phi_{XY}, qX) - (v_{a} \phi_{XY}, q_XX)  - (v_{a} \phi_{XY}, q)\\
&- (v_{aY} \phi_{YY}, qX) - (v_{a} \phi_{YY}, q_YX)\\
=&(v_{aXY} \phi_{X}, qX) + (v_{aX} \phi_{X}, q_YX)  + (v_{aX} \phi_{Y}, q)\\
&+(v_a \phi_Y, q_X) + (v_{aYY} \phi_{Y}, qX) + (v_{aY} \phi_{Y}, q_YX)\\
&- (v_{a}  (u_{aXY}q +  u_{aX}q_Y + u_{aY} q_X + u_a q_{XY}), q_XX)\\
&- (v_{a} (u_{aYY}q + 2 u_{aY}q_Y + u_a q_{YY}), q_YX).
\end{aligned}
\end{equation}
Immediately we can estimate
\begin{align*}
&|(\ref{2-8}.2)| + |(\ref{2-8}.4)| + |(\ref{2-8}.6)|\\
\lesssim &  L\|v_{aX}\|_\infty \|\phi_X\|_2 \|q_Y\|_2 + \|v_{a}\|_\infty \|\phi_Y\|_2 \|q_X\|_2 + L\|v_{aY}\|_\infty \|\phi_Y\|_2 \|q_Y\|_2\\
=& O(\|v_e^0\|_\infty + \sqrt{\varepsilon} + L) (\|q_X\|_2^2 + \|q_Y\|_2^2).
\end{align*}
We use Poincare or Hardy's inequality to estimate
\begin{align*}
&|(\ref{2-8}.1)| + |(\ref{2-8}.3)| + |(\ref{2-8}.5)|\\
\lesssim &  L\|v_{aXY}\|_\infty \|\phi_X\|_2 \|q\|_2 + L\|v_{aX}\|_\infty \|\phi_Y\|_2 \|\frac{q}{X}\|_2 + L\|\widetilde{Y} v_{aYY}\|_\infty \|\phi_Y\|_2 \|\frac{q}{\widetilde{Y}}\|_2 \\
=& O(L) (\|q_X\|_2^2 + \|q_Y\|_2^2).
\end{align*}
For $(\ref{2-8}.7)$, we use integration by parts in $Y$, \eqref{assumption2}, and \eqref{u_a_property} to estimate
\begin{align*}
|(\ref{2-8}.7)| \lesssim & L \|v_a\|_\infty \|\widetilde{Y} u_{aXY}\|_\infty \|q_X\|_2 \|\frac{q}{\widetilde{Y}}\|_2 + L\|v_a\|_\infty \| u_{aX}\|_\infty \|q_Y\|_2\|q_X\|_2\\
&+ L \|\frac{v_a}{\widetilde{Y}}\|_\infty \|\widetilde{Y} u_{aY}\|_\infty \|q_X\|_2^2 + L \|v_{aY}\|_\infty \| u_{a}\|_\infty \|q_X\|_2^2\\
=& O(L) (\|q_X\|_2^2 + \|q_Y\|_2^2).
\end{align*}
Similarly, we can estimate the last term
$$
|(\ref{2-8}.8)| = O(L) \|q_Y\|_2^2.
$$

For the second term of \eqref{2-7-1}, we have
\begin{align*}
(v_r \Delta \phi_Y , qX)=& (v_r \phi_{XXY}, qX) + (v_r \phi_{YYY}, qX)\\
=& -(v_{rX} \phi_{XY}, qX) - (v_{r} \phi_{XY}, q_XX)  - (v_{r} \phi_{XY}, q)\\
&- (v_{rY} \phi_{YY}, qX) - (v_{r} \phi_{YY}, q_YX)\\
=&(v_{rXY} \phi_{X}, qX) + (v_{rX} \phi_{X}, q_YX)  + (v_{rX} \phi_{Y}, q)\\
&+(v_r \phi_Y, q_X) + (v_{rYY} \phi_{Y}, qX) + (v_{rY} \phi_{Y}, q_YX)\\
&- (v_{r}  (u_{aXY}q +  u_{aX}q_Y + u_{aY} q_X + u_a q_{XY}), q_XX)\\
&- (v_{r} (u_{aYY}q + 2 u_{aY}q_Y + u_a q_{YY}), q_YX).\\
\end{align*}
Note that unlike the crucial term in Lemma \ref{lem_1}, $(v_{rX} \phi_{X}, q_YX)$ can be straightforwardly estimated as 
$$(v_{rX} \phi_{X}, q_YX) \lesssim \|X v_{rX}\|_\infty \|q_X\|_2 \|q_Y\|_2 =  O(\|h\|_\infty) \|q_X\|_2 \|q_Y\|_2$$
due to the presence of weight $X$. We can estimate the rest by similar argument as the terms in \eqref{2-8} and the fact that the interaction between $v_r$ and the Prandtl layers are small, to obtain
\begin{equation}\label{2-9}
(v_r \Delta \phi_Y , qX) = O(\varepsilon + \|h\|_{\infty}) (\|q_X\|_2^2 + \|q_Y\|_2^2).
\end{equation}

Finally, we estimate the last term of $J[\phi]$:
\begin{equation}\label{2-10}
\begin{aligned}
(- \Delta v_s \phi_Y, qX) =& -(v_{aXX} \phi_Y, qX) -(v_{aYY} \phi_Y, qX)\\ 
&-(v_{rXX} \phi_Y, qX) -(v_{rYY} \phi_Y, qX)\\
=& O(L)\|q_X\|_2\|q_Y\|_2 +  O(L)\|q_Y\|_2^2 \\
&+ O( \|h\|_{\infty})\|q_X\|_2\|q_Y\|_2 + O(\varepsilon)\|q_X\|_2\|q_Y\|_2.
\end{aligned}
\end{equation}
Now collecting \eqref{2-1}-\eqref{2-10}, and using \eqref{Hardy_1} with $F = q_X$ and $q_Y$, we have
\begin{align*}
&\frac{3}{2}\|u_a q_X\|_2^2 + \frac{1}{2}\|u_a q_Y\|_2^2 - \varepsilon (u_a q_{XX}, q_{XX}X) - 2\varepsilon (u_a q_{XY}, q_{XY}X)\\
& - \varepsilon (u_a q_{YY}, q_{YY}X)
+\varepsilon (u_{aY}q_{Y}, q_{Y}X)|_{Y=1} - \varepsilon (u_{aY}q_{Y}, q_{Y}X)|_{Y=-1}\\
=& O(L + \sqrt{\varepsilon} + \|h\|_\infty) (\|q_X\|_2^2 + \|q_Y\|_2^2) + \|f\|_2^2\\
=& O(L + \sqrt{\varepsilon} + \|h\|_\infty) \Big( \sigma \varepsilon \|\sqrt{u_a} q_{XY}\|_2^2 + \frac{1}{\sigma^2}\|u_a q_X\|_2^2 + \sigma \varepsilon \|\sqrt{u_a} q_{YY}\|_2^2 \\ 
&+ \frac{1}{\sigma^2}\|u_a q_Y\|_2^2 \Big)+ \|f\|_2^2.
\end{align*}
Choosing $\sigma = (L + \sqrt{\varepsilon} + \|h\|_\infty)^{\frac{1}{3}}$, we will obtain the desired estimate \eqref{energy_2}.
\end{proof}

Our next lemma addresses the case for arbitrary $L>0$, under the additional assumptions that the background Euler flow is a shear flow, and the first Prandtl layer $u^0_p$ is concave. This lemma is based on the delicate cancellations observed in \cites{Iyer20,IyerMasmoudi21a,GaoZhang23} when $u^0_p$ is concave.

\begin{lemma}\label{lem3}
Let $L > 0$, $q$ be a solution of \eqref{linear_quotient}. Assume that $\varepsilon \ll 1$, $\|h\|_\infty \ll 1$, $v_e^0 = 0$ and $u_p^0$ satisfies \eqref{concave}, we have
\begin{equation}\label{energy_3}
\begin{aligned}
&\|u_a q_X\|_2^2 + \|u_a q_Y\|_2^2\\ 
\lesssim& (\sqrt{\varepsilon} + \|h\|_\infty)^{\frac{4}{3}} \varepsilon \left( \| \sqrt{u_a} q_{XX}\|_2^2 +\| \sqrt{u_a} q_{XY}\|_2^2 + \| \sqrt{u_a} q_{YY}\|_2^2 \right) + \|f\|_2^2.
\end{aligned}
\end{equation}
\end{lemma}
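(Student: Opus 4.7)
\textbf{Proof plan for Lemma \ref{lem3}.} The strategy mirrors that of Lemma \ref{lem2}: test equation \eqref{linear_quotient} against $qX$ and estimate term by term. The goal is to exploit the additional structural assumptions $v_e^0 = 0$ (shear background) and $-u^0_{pYY} \ge 0$ (concavity) to eliminate every instance of the crude bound $|X| \le L$ that led to the $O(L)$ factors in Lemma \ref{lem2}.

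First I would revisit the Rayleigh terms \eqref{2-1}--\eqref{2-2}. Under the shear assumption, $u_{aX} = u_{pX}^0 + O(\sqrt{\varepsilon})$. Using the first Prandtl equation and the concavity $u_{pYY}^0 \le 0$, one verifies (following \cites{Iyer20,IyerMasmoudi21a,GaoZhang23}) the sign property $u_{pX}^0 \ge 0$ throughout $\Omega$. Since $u_a \ge 0$ and $X \le 0$, this yields $u_a u_{aX} X \le 0$, so the quantities $-(u_a u_{aX}, q_X^2 X)$ and $-(u_a u_{aX}, q_Y^2 X)$ become nonnegative LHS contributions rather than the $O(L)$ errors of Lemma \ref{lem2}. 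Together with the positive-definite parts $\tfrac{3}{2}\|u_a q_X\|_2^2 + \tfrac{1}{2}\|u_a q_Y\|_2^2$, this produces the clean coercive piece needed to close \eqref{energy_3}.

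Next, the biharmonic terms. I would split $\varepsilon(u_{aYY} q_Y, q_Y X) = \varepsilon(u_{eYY}^0 q_Y, q_Y X) + \varepsilon(u_{pYY}^0 q_Y, q_Y X)$ in \eqref{2-2-2}.2, and similarly for the cross piece in \eqref{2-2-1}. The Prandtl contribution is nonnegative by concavity and $X \le 0$, hence favorable; the Euler contribution reduces by the shear assumption to a function of $Y$ only, and after one integration by parts in $X$ the $X$-weight is absorbed onto $q_Y^2$, giving an $O(\varepsilon)$ bound independent of $L$. The $u_{aYYY}$ term in \eqref{2-2-2}.3 is handled by integrating by parts in $Y$ and using \eqref{u_a_property} with the weight $\widetilde{Y}$ as in Lemma \ref{lem2}, but now transferring $X$ onto the lower-order factor. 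For the $J[\phi]$ terms, the assumption $v_e^0 = 0$ yields $v_a = v_p = O(\sqrt{\varepsilon})$ via \eqref{u_a_property}, so every $v_a$-dependent term that contributed $O(L)$ in \eqref{2-8} is now $O(\sqrt{\varepsilon})$ with no $L$-dependence. The $u_{aX}$-dependent terms in \eqref{2-6}--\eqref{2-7} become $u_{pX}^0$-terms, supported in a $\sqrt{\varepsilon}$-layer near $\{Y = \pm 1\}$, and are closed by combining localization with Hardy's inequality \eqref{Hardy_2} in $X$.

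The main obstacle is the first step: rigorously extracting the sign $u_{pX}^0 \ge 0$ and the associated coercivity from the concavity assumption via the Prandtl system. This is precisely where the delicate cancellations of \cites{Iyer20,IyerMasmoudi21a,GaoZhang23} enter, and care must be taken at the boundaries $Y = \pm 1$ where the boundary terms of the form $\varepsilon(u_{aY} q_Y, q_Y X)|_{Y = \pm 1}$ must remain nonnegative (as in \eqref{2-2-2}). Once this coercive structure is in place, the remaining estimates are parallel to Lemma \ref{lem2}, with every $O(L)$ factor replaced either by a favorable sign or by an $O(\sqrt{\varepsilon} + \|h\|_\infty)$ factor. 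Applying the Hardy inequalities \eqref{Hardy_1}--\eqref{Hardy_2} and optimizing $\sigma = (\sqrt{\varepsilon} + \|h\|_\infty)^{1/3}$ as at the end of Lemma \ref{lem2} then yields \eqref{energy_3}.
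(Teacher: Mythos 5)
There is a genuine gap at the heart of your plan. You claim that concavity $-u^0_{pYY}\ge 0$ implies the sign property $u^0_{pX}\ge 0$ ``throughout $\Omega$,'' and then use this to conclude $u_a u_{aX}X\le 0$. This is false: the Blasius profile, which is the canonical concave Prandtl solution and the motivating example for Theorem \ref{Thm2}, has $\bar u_x<0$ (the boundary layer thickens in $x$, so for fixed $y$ the velocity decreases). Since the trace $u^0_e(0,-1)$ is a constant under the shear assumption, $u^0_{pX}=\bar u_x<0$. Your proposed signs would therefore be exactly reversed, and you would lose control of the Rayleigh contribution $-(u_a u_{aX} q_X, q_X X)$.

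What actually saves this term is something you have not invoked: it \emph{cancels algebraically} against the piece $(u_{aX}u_a q_X, q_X X)$ that comes out of $(-u_{aX}\Delta\phi, qX)$ in $J[\phi]$ (see \eqref{3-7-1}), regardless of the sign of $u_{aX}$. The paper's mechanism is a cancellation, not a sign argument. Similarly, your plan to treat the $\varepsilon(u_{aYY}q_Y,q_YX)$ piece in isolation as a good term (which it is, pointwise, by concavity and $X\le 0$) does not close the estimate, because the companion $v_a$-terms are not small: while $\|v_a\|_\infty=\|v_p\|_\infty\lesssim\sqrt{\varepsilon}$, it appears multiplied by $u_{aY}\sim 1/\sqrt{\varepsilon}$ in the Prandtl layer, so $v_a u_{aY}$ and $v_{aY}u_a$ are $O(1)$ and cannot be absorbed as errors. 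The correct step — and the one the paper carries out in \eqref{3-12} — is to group $\varepsilon u_{aYY}$ with $\tfrac12(v_{aY}u_a - v_a u_{aY})$, use incompressibility $v_{aY}=-u_{aX}$ and the leading-order Prandtl equation \eqref{Prandtl_2} to reduce the whole coefficient to $\tfrac12\varepsilon u^0_{pYY}+O(\sqrt{\varepsilon})$, and only then invoke concavity to obtain the favorable sign. Without this combined cancellation your estimate leaves behind $O(1)$ terms in $L$ and \eqref{energy_3} does not follow.
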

\begin{proof}
We take inner product of the equation \eqref{linear_quotient} with $qX$, and estimate term by term.  Some of the terms are estimated exactly as in Lemma \ref{lem2}. We will mainly focus on the differences.

\textbf{Step 1:} Rayleigh terms.

As in \eqref{2-1} and \eqref{2-2}, we have
\begin{equation}\label{3-1}
((u_a^2 q_{X})_{XX} , qX) = \frac{3}{2}(u_a^2 q_X, q_X) - (u_a u_{aX} q_X, q_XX),
\end{equation}
and
\begin{equation}\label{3-2}
((u_a^2 q_{Y})_{XY} , qX) =  \frac{1}{2}(u_a^2 q_Y, q_Y) -(u_a u_{aX} q_Y, q_Y X).
\end{equation}

\textbf{Step 2:} $\Delta^2 \phi$ term.

For the Bi-Laplacian term, we split it into three parts again. For the first part, we estimate exactly as in \eqref{2-2-0.5}:
\begin{equation}\label{3-3}
-\varepsilon (\phi_{XXXX}, qX) = - \varepsilon( u_a q_{XX}, q_{XX}X) + O(\varepsilon)\|q_X\|_2^2.
\end{equation}
For the second part, we have
\begin{equation}\label{3-4}
\begin{aligned}
-2\varepsilon (\phi_{XXYY}, qX) =& -2\varepsilon (u_a q_{XY}, q_{XY}X) +2\varepsilon (u_{aXYY} q, q_{X}X) + 2\varepsilon (u_{aXY} q_{Y}, q_{X}X)\\
&+ \varepsilon(u_{aXX}q_Y,q_Y X)+ 2\varepsilon (u_{aX} q_{Y}, q_{Y})+ \varepsilon(u_{aYY}q_X,q_XX) \\
&-2\varepsilon (u_{aXY}q +  u_{aX}q_Y + u_{aY} q_X, q_{Y}).
\end{aligned}
\end{equation}
We keep the terms $(\ref{3-4}.1)$ and $(\ref{3-4}.6)$, and estimate the rest exactly as in \eqref{2-2-1}. We have
\begin{equation}\label{3-4-1}
\begin{aligned}
-2\varepsilon (\phi_{XXYY}, qX) =& -2\varepsilon (u_a q_{XY}, q_{XY}X) + \varepsilon(u_{aYY}q_X,q_XX) \\
&+ O(\sqrt{\varepsilon})(\|q_X\|_2^2 + \|q_Y\|_2^2).
\end{aligned}
\end{equation}
For the last part, as in \eqref{2-2-2}, we have
\begin{equation}\label{3-5}
\begin{aligned}
-\varepsilon (\phi_{YYYY}, qX) =& -\varepsilon (u_a q_{YY}, q_{YY}X) + 2\varepsilon(u_{aYY}q_Y,q_YX) - \frac{1}{2} \varepsilon (u_{aYYYY}q, qX)\\
&+\varepsilon (u_{aY}q_{Y}, q_{Y}X)|_{Y=1} - \varepsilon (u_{aY}q_{Y}, q_{Y}X)|_{Y=-1}.
\end{aligned}
\end{equation}

\textbf{Step 3:} $J[\phi]$ terms.

Now we move to the $J[\phi]$ terms. For the first two terms, we estimate exactly as in \eqref{2-4} and \eqref{2-5} to obtain
\begin{equation}\label{3-6}
|(u_r \Delta \phi_X, qX)| + |(- \Delta u_r \phi_X, qX)| = O(\varepsilon)\|q_X\|_2^2.
\end{equation}

Next we move to $(-u_{aX} \Delta \phi , qX)$. From \eqref{2-6}, we have
\begin{equation}\label{3-7}
\begin{aligned}
(- u_{aX} \Delta \phi , qX)
=& (u_{aXX} \phi_{X}, qX) + (u_{aX} \phi_{X}, q_XX)+ (u_{aX} \phi_{X}, q)\\
& + (u_{aXY} \phi_{Y}, qX) + (u_{aX} \phi_{Y}, q_YX).
\end{aligned}
\end{equation}
We can estimate
\begin{align*}
|(\ref{3-7}.1)| + |(\ref{3-7}.3)| \lesssim& (\| \widetilde{Y} u_{aXX}\|_\infty + \| \widetilde{Y} u_{aX}\|_\infty )  \|\phi_X\|_2 \|\frac{q}{\widetilde{Y}}\|_2\\
=& O(\sqrt{\varepsilon}) \|q_X\|_2 \|q_Y\|_2,
\end{align*}
where we used the fact that the background Euler is a shear flow so that $u_{aX} = u_{pX}^0 + O(\sqrt{\varepsilon})$, \eqref{u_a_property}, and the Hardy's inequality. For the rest, we write
\begin{align*}
&(\ref{3-7}.2) + (\ref{3-7}.4) + (\ref{3-7}.5)\\
=& (u_{aX} u_a q_X, q_XX) + (u_{aX}^2 q, q_XX) + (u_{aXY} u_a q_Y, qX)\\
&+ (u_{aXY} u_{aY} q, qX) + (u_{aX} u_{aY} q, q_YX) + (u_{aX} u_a q_Y, q_YX)\\
=& (u_{aX} u_a q_X, q_XX) + (u_{aX} u_a q_Y, q_YX) - \frac{1}{2} ((u_{aXY} u_a)_Y q, qX)\\
&+ (u_{aXY} u_{aY} q, qX) - \frac{1}{2 }  ((u_{aX} u_{aY})_Y q, qX) + O(1)\| \widetilde{Y} u_{aX}^2\|_\infty \|q_X\|_2 \|\frac{q}{\widetilde{Y}}\|_2\\
=& (u_{aX} u_a q_X, q_XX) + (u_{aX} u_a q_Y, q_YX)\\
& - \frac{1}{2 } ((u_{aXYY} u_a + u_{aX}u_{aYY}) q,qX) + O(\sqrt{\varepsilon}) \|q_X\|_2 \|q_Y\|_2.
\end{align*}
Therefore,
\begin{equation}\label{3-7-1}
\begin{aligned}
(- u_{aX} \Delta \phi , qX) =& (u_{aX} u_a q_X, q_XX) + (u_{aX} u_a q_Y, q_YX)\\
& - \frac{1}{2 } ((u_{aXYY} u_a + u_{aX}u_{aYY}) q,qX) + O(\sqrt{\varepsilon}) \|q_X\|_2 \|q_Y\|_2.
\end{aligned}
\end{equation}

For the next term in $J[\phi]$, we have
\begin{equation}\label{3-8}
\begin{aligned}
(\Delta u_{aX} \phi, qX) =& (u_{aXXX} \phi, qX) + (u_{aXYY} \phi, qX)\\
=& (u_{aXYY} u_a q, qX) + O(1)\|\widetilde{Y}^2 u_{aXXX}\|_\infty \|\frac{\phi}{\widetilde{Y}}\|_2 \|\frac{q}{\widetilde{Y}}\|_2 \\
=& (u_{aXYY} u_a q, qX) + O(\varepsilon)\|q_Y\|_2^2.
\end{aligned}
\end{equation}

Next, we split
\begin{equation*}
(v_s \Delta \phi_Y , qX) = (v_a \Delta \phi_Y , qX) + (v_r \Delta \phi_Y , qX).
\end{equation*}
For the later term, we use the estimate \eqref{2-9}:
\begin{equation}\label{3-9}
(v_r \Delta \phi_Y , qX) = O(\varepsilon + \|h\|_{\infty}) (\|q_X\|_2^2 + \|q_Y\|_2^2).
\end{equation}
And for the first term, as in \eqref{2-8}, we have
\begin{equation}\label{3-10}
\begin{aligned}
(v_a \Delta \phi_Y , qX)=&(v_{aXY} \phi_{X}, qX) + (v_{aX} \phi_{X}, q_YX)  + (v_{aX} \phi_{Y}, q)\\
&+(v_a \phi_Y, q_X) + (v_{aYY} \phi_{Y}, qX) + (v_{aY} \phi_{Y}, q_YX)\\
&- (v_{a}  (u_{aXY}q +  u_{aX}q_Y + u_{aY} q_X + u_a q_{XY}), q_XX)\\
&- (v_{a} (u_{aYY}q + 2 u_{aY}q_Y + u_a q_{YY}), q_YX).
\end{aligned}
\end{equation}
The first four terms of \eqref{3-10} can be controlled by $O(\sqrt{\varepsilon})\|q_X\|_2\|q_Y\|_2$ through \eqref{u_a_property} and Hardy's inequality.
\begin{align*}
(\ref{3-10}.5) + (\ref{3-10}.6) =& (v_{aYY} u_{aY} q, qX) + (v_{aYY} u_a q_Y, qX)\\
&+ (v_{aY} u_{aY} q, q_YX) + (v_{aY} u_a q_Y, q_YX)\\
=& -\frac{1}{2} (v_{aYYY} u_a q, qX) - \frac{1}{2 }(v_{aY} u_{aYY} q, qX) + (v_{aY} u_a q_Y, q_YX),
\end{align*}
\begin{align*}
(\ref{3-10}.7) =&  - (v_{a} u_{aXY}q, q_XX) - (v_{a} u_{aX}q_Y, q_XX)\\
&- (v_{a} u_{aY} q_X, q_XX) - (v_{a} u_a q_{XY}, q_XX)\\
=& O(1) \|v_a\|_\infty \|\widetilde{Y} u_{aXY}\|_\infty \|\frac{q}{\widetilde{Y}}\|_2 \|q_X\|_2 + O(1) \|v_a\|_\infty \| u_{aX}\|_\infty \|q_Y\|_2\|q_X\|_2\\
&- (v_{a} u_{aY} q_X, q_XX) + \frac{1}{2} (v_{aY} u_a q_{X}, q_XX) + \frac{1}{2} (v_{a} u_{aY} q_{X}, q_XX)\\
=& \frac{1}{2} ( (v_{aY} u_a- v_{a} u_{aY} ) q_{X}, q_XX) + O(\sqrt{\varepsilon})\|q_X\|_2\|q_Y\|_2,
\end{align*}
and
\begin{align*}
(\ref{3-10}.8) =& - (v_{a} u_{aYY}q, q_YX) - 2 (v_{a} u_{aY}q_Y, q_YX) - (v_{a} u_a q_{YY}, q_YX)\\
=& \frac{1}{2} (v_{aY} u_{aYY}q, qX) + \frac{1}{2} (v_{a} u_{aYYY}q, qX)\\
&- \frac{3}{2}(v_{a} u_{aY}q_Y, q_YX) + \frac{1}{2} (v_{aY} u_a q_Y, q_YX).
\end{align*}
Therefore,
\begin{equation}\label{3-10-1}
\begin{aligned}
(v_a \Delta \phi_Y , qX)=& \frac{1}{2} ((v_{a} u_{aYYY} -v_{aYYY} u_a) q, qX) + \frac{3}{2} ((v_{aY} u_a - v_{a} u_{aY})  q_Y, q_YX)\\
&+ \frac{1}{2} ( (v_{aY} u_a- v_{a} u_{aY} ) q_{X}, q_XX) +O(\sqrt{\varepsilon})\|q_X\|_2\|q_Y\|_2.
\end{aligned}
\end{equation}

Finally, we estimate the last term of $J[\phi]$. As in \eqref{2-10}, we have
\begin{equation}\label{3-11}
\begin{aligned}
(- \Delta v_s \phi_Y, qX) =& -(v_{aXX} \phi_Y, qX) -(v_{aYY} \phi_Y, qX)\\ 
&-(v_{rXX} \phi_Y, qX) -(v_{rYY} \phi_Y, qX)\\
=& -(v_{aYY} \phi_Y, qX)+ O( \sqrt\varepsilon + \|h\|_{\infty})\|q_X\|_2\|q_Y\|_2,
\end{aligned}
\end{equation}
and
\begin{align*}
-(v_{aYY} \phi_Y, qX)=& -(v_{aYY} u_{aY} q, qX) - (v_{aYY} u_a q_Y, qX)\\
=& \frac{1}{2} ((v_{aYYY} u_a -v_{aYY} u_{aY})  q, qX).
\end{align*}

\textbf{Step 4:} Cancellation.

Collecting all the terms from \eqref{3-1}-\eqref{3-11}, we have
\begin{equation}\label{3-12}
\begin{aligned}
&\frac{3}{2}(u_a^2 q_X, q_X)+ \frac{1}{2}(u_a^2 q_Y, q_Y) -2\varepsilon (u_a q_{XY}, q_{XY}X) + \varepsilon(u_{aYY}q_X,q_XX)\\
&-\varepsilon (u_a q_{YY}, q_{YY}X) + 2\varepsilon(u_{aYY}q_Y,q_YX) - \frac{1}{2} \varepsilon (u_{aYYYY}q, qX)\\
&+\varepsilon (u_{aY}q_{Y}, q_{Y}X)|_{Y=1} - \varepsilon (u_{aY}q_{Y}, q_{Y}X)|_{Y=-1}\\ 
&+ \frac{1}{2 } ((u_{aXYY} u_a - u_{aX}u_{aYY}) q,qX) + \frac{1}{2} ((v_{a} u_{aYYY} -v_{aYY} u_{aY}) q, qX)\\
& + \frac{3}{2} ((v_{aY} u_a - v_{a} u_{aY})  q_Y, q_YX) + \frac{1}{2} ( (v_{aY} u_a- v_{a} u_{aY} ) q_{X}, q_XX) 
\\=& (f,qX)+ O(\sqrt{\varepsilon} + \|h\|_\infty)(\|q_X\|_2 + \|q_Y\|_2).
\end{aligned}
\end{equation}
The terms $(\ref{3-12}.1)$, $(\ref{3-12}.2)$, $(\ref{3-12}.3)$, $(\ref{3-12}.5)$, $(\ref{3-12}.6)$, $(\ref{3-12}.8)$, and $(\ref{3-12}.9)$ are favorable nonnegative terms, which will be kept on the left-hand side. Note that from the construction of $u_p^0$, we have
\begin{equation}\label{Prandtl_2}
(u_e^0 + u_p^0) u_{pX}^0 + v_p^0 u_{pY}^0 - \varepsilon u_{pYY}^0 = O(\sqrt{\varepsilon}).
\end{equation}
By \eqref{concave},\eqref{Prandtl_2}, and $X \le 0$, we have
\begin{align*}
&(\ref{3-12}.4) + (\ref{3-12}.13)\\
=& \Big( (\varepsilon u_{aYY} + \frac{1}{2} v_{aY} u_a- \frac{1}{2} v_{a} u_{aY}) q_X, q_X X \Big)\\
=& \Big( (\varepsilon u^0_{pYY} - \frac{1}{2} u_{pX}^0 (u_e^0 + u_p^0)- \frac{1}{2} v_{p}^0 u_{pY}^0) q_X, q_X X \Big) + O(\sqrt{\varepsilon})\|q_X\|_2\\
\ge & \frac{1}{2} \Big( (\varepsilon u^0_{pYY} -u_{pX}^0 (u_e^0 + u_p^0)-  v_{p}^0 u_{pY}^0) q_X, q_X X \Big) + O(\sqrt{\varepsilon})\|q_X\|_2\\
=& O(\sqrt{\varepsilon})\|q_X\|_2.
\end{align*}
Similarly,
\begin{align*}
&(\ref{3-12}.6) + (\ref{3-12}.12)\\
=& \Big( (2\varepsilon u_{aYY} + \frac{3}{2} v_{aY} u_a- \frac{3}{2} v_{a} u_{aY}) q_Y, q_Y X \Big)\\
=& \Big( (2\varepsilon u^0_{pYY} - \frac{3}{2} u_{pX}^0 (u_e^0 + u_p^0)- \frac{3}{2} v_{p}^0 u_{pY}^0) q_Y, q_Y X \Big) + O(\sqrt{\varepsilon})\|q_Y\|_2\\
\ge & \frac{3}{2} \Big( (\varepsilon u^0_{pYY} -u_{pX}^0 (u_e^0 + u_p^0)-  v_{p}^0 u_{pY}^0) q_Y, q_Y X \Big) + O(\sqrt{\varepsilon})\|q_Y\|_2\\
=& O(\sqrt{\varepsilon})\|q_Y\|_2.
\end{align*}
For the rest terms, we use \eqref{Prandtl_2} and Hardy's inequality to obtain
\begin{align*}
&(\ref{3-12}.7) + (\ref{3-12}.10) + (\ref{3-12}.11)\\
=& \frac{1}{2} \Big( (-\varepsilon u_{aYYYY} + u_{aXYY} u_a - u_{aX}u_{aYY} + v_{a} u_{aYYY} -v_{aYY} u_{aY})q, qX \Big)\\
=& \frac{1}{2}\Big((-\varepsilon u_{aYY} + u_{aX}u_a + v_au_{aY})_{YY} q, qX\Big)\\
=&O(1) \| \widetilde{Y}^2  (-\varepsilon u_{aYY} + u_{aX}u_a + v_au_{aY})_{YY}\|_{\infty} \|\frac{q}{\widetilde{Y}}\|_2^2\\
=& O(\sqrt{\varepsilon}) \|q_Y\|_2^2.
\end{align*}
Therefore, by \eqref{Hardy_1} with $F = q_X$ and $q_Y$, \eqref{3-12} implies
\begin{align*}
&\frac{3}{2}\|u_a q_X\|_2^2 + \frac{1}{2}\|u_a q_Y\|_2^2\\ 
=& O(\sqrt{\varepsilon} + \|h\|_\infty) (\|q_X\|_2^2 + \|q_Y\|_2^2) + \|f\|_2^2\\
=& O(\sqrt{\varepsilon} + \|h\|_\infty) \Big( \sigma \varepsilon \|\sqrt{u_a} q_{XY}\|_2^2 + \frac{1}{\sigma^2}\|u_a q_X\|_2^2 + \sigma \varepsilon \|\sqrt{u_a} q_{YY}\|_2^2 + \frac{1}{\sigma^2}\|u_a q_Y\|_2^2 \Big)\\ &+ \|f\|_2^2.
\end{align*}
Choosing $\sigma = (\sqrt{\varepsilon} + \|h\|_\infty)^{\frac{1}{3}}$, we will obtain the desired estimate \eqref{energy_3}.
\end{proof}

\section{Proof of Theorems \ref{Thm1} and \ref{Thm2}}\label{sec_proof}

In this section, we prove Theorems \ref{Thm1} and \ref{Thm2}. Let us first recall the space $\cX$ and the corresponding norm defined in \eqref{def_spaceX}.
As a consequence of the estimates obtained in Section \ref{sec_est}, we have the following crucial apriori estimate.
\begin{proposition}\label{prop_estimate}
Under the assumptions of Theorem \ref{Thm1} or \ref{Thm2}, let $\phi$ be the solution of \eqref{remainder_linear} with the boundary conditions \eqref{remainder_boundary_condition}, then
$$
\|\phi\|_{\cX} \lesssim \|f\|_2.
$$
\end{proposition}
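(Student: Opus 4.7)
The plan is to climb through the four components of the $\cX$-norm in order, combining the energy estimates of Section \ref{sec_est} via absorption, Hardy-type inequalities, and elliptic regularity. Since Lemmas \ref{lem_1}, \ref{lem2}, \ref{lem3} control the quotient $q=\phi/u_a$ in suitable weighted norms, I would first close an estimate at the $q$-level and then descend to $\phi$.

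The core step is the absorption. Under the hypotheses of Theorem \ref{Thm1} we have $L+\sqrt\varepsilon+\|h\|_\infty\ll 1$, and under those of Theorem \ref{Thm2} we have $\sqrt\varepsilon+\|h\|_\infty\ll 1$; in either case the $(\cdot)^{4/3}$ prefactor in Lemma \ref{lem2} (resp.\ Lemma \ref{lem3}) is arbitrarily small. Substituting \eqref{energy_1} into \eqref{energy_2} (resp.\ \eqref{energy_3}) lets us absorb $\varepsilon\|\sqrt{u_a}\nabla^2 q\|_2^2$, yielding
$$
\|u_a\nabla q\|_2^2 + \varepsilon\|\sqrt{u_a}\nabla^2 q\|_2^2 \lesssim \|f\|_2^2,
$$
which immediately gives the third piece $\varepsilon^{1/2}\|\sqrt{u_a}\nabla^2 q\|_2\lesssim\|f\|_2$ of the $\cX$-norm. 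Applying Hardy \eqref{Hardy_1} with $F=q_X,q_Y$ and $\sigma=O(1)$ then gives $\|\nabla q\|_2\lesssim\sqrt\varepsilon\|\sqrt{u_a}\nabla^2 q\|_2+\|u_a\nabla q\|_2\lesssim\|f\|_2$. Writing $\phi_X=u_{aX}q+u_aq_X$, $\phi_Y=u_{aY}q+u_aq_Y$ and invoking \eqref{u_a_property} along with $\|q/\widetilde{Y}\|_2\lesssim\|q_Y\|_2$ (cf.\ \eqref{phi_X to q_X}) gives the first piece $\|\nabla\phi\|_2\lesssim\|f\|_2$. For the second piece, each expansion such as $\phi_{YY}=u_{aYY}q+2u_{aY}q_Y+u_aq_{YY}$ is controlled by choosing suitable $(i,j)$ in \eqref{u_a_property} --- e.g.\ $\sqrt\varepsilon\|\widetilde{Y}u_{aYY}\|_\infty\lesssim 1$ paired with $\|q/\widetilde{Y}\|_2\lesssim\|q_Y\|_2$, $\sqrt\varepsilon\|u_{aY}\|_\infty\lesssim 1$, and $\sqrt\varepsilon\|u_aq_{YY}\|_2\lesssim\|u_a\|_\infty^{1/2}\sqrt\varepsilon\|\sqrt{u_a}q_{YY}\|_2$ --- so that $\sqrt\varepsilon\|\nabla^2\phi\|_2\lesssim\|f\|_2$.

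For the fourth piece $\varepsilon^3\|\nabla^3\phi\|_2$, I would solve \eqref{remainder_linear} for the bi-Laplacian:
$$
\varepsilon\Delta^2\phi=u_s\Delta\phi_X+v_s\Delta\phi_Y-\Delta u_s\,\phi_X-\Delta v_s\,\phi_Y-f.
$$
From \eqref{u_a_property}--\eqref{u_r_property2} one has $\|u_s\|_\infty+\|v_s\|_\infty\lesssim 1$, $\|\Delta u_s\|_\infty\lesssim\varepsilon^{-1}$, and the highly singular $\|\Delta v_s\|_\infty\lesssim\varepsilon^{-2}$ coming from $v_{rXX}$. Combined with $\|\nabla\phi\|_2\lesssim\|f\|_2$ this gives
$$
\varepsilon\|\Delta^2\phi\|_2\lesssim\|\nabla^3\phi\|_2+\varepsilon^{-2}\|f\|_2.
$$
Standard clamped-plate elliptic regularity $\|\nabla^4\phi\|_2\lesssim\|\Delta^2\phi\|_2$ on the rectangle $\Omega$ (with $\phi=\partial_\nu\phi=0$ on $\partial\Omega$), Gagliardo--Nirenberg interpolation $\|\nabla^3\phi\|_2^2\lesssim\|\nabla^4\phi\|_2\|\nabla^2\phi\|_2$, and Young's inequality to absorb $\|\nabla^3\phi\|_2$ into $\varepsilon\|\nabla^4\phi\|_2$ produce $\|\nabla^3\phi\|_2\lesssim\varepsilon^{-7/4}\|f\|_2$, whence $\varepsilon^3\|\nabla^3\phi\|_2\lesssim\varepsilon^{5/4}\|f\|_2$, closing the $\cX$-estimate.

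The main obstacle is entirely the absorption in the first step: the whole scheme rests on the prefactors in Lemmas \ref{lem2}--\ref{lem3} being arbitrarily small --- via $L\ll 1$ in Theorem \ref{Thm1}, or via the delicate cancellations exploiting the concavity \eqref{concave} of $u_p^0$ in the shear case of Theorem \ref{Thm2}. The $H^3$ step is comparatively routine because of the very weak $\varepsilon^3$ weight, although precisely that weight is forced by the $\varepsilon^{-2}$ singularity of $\Delta v_r$ that the profile $(u_r,v_r)$ introduces near $X=0$.
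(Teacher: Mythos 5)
Your proposal is correct and follows essentially the same route as the paper: it closes the quotient estimate by substituting \eqref{energy_1} into \eqref{energy_2} (resp.\ \eqref{energy_3}) and absorbing the second-order term thanks to the smallness of $L+\sqrt\varepsilon+\|h\|_\infty$ (resp.\ $\sqrt\varepsilon+\|h\|_\infty$), then descends from $q$ to $\phi$ using \eqref{Hardy_1}, \eqref{phi_X to q_X}, and \eqref{u_a_property}, and finishes with biharmonic elliptic regularity from Blum--Rannacher.

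The one step where you take a genuinely different route is the $H^3$ bound. The paper records directly
\[
\|\nabla^3\phi\|_2\lesssim\frac{1}{\varepsilon}\|f\|_2+\frac{1}{\varepsilon}\bigl(\|u_{sX}\|_\infty+\|v_{sY}\|_\infty\bigr)\|\nabla^2\phi\|_2+\frac{1}{\varepsilon}\bigl(\|\Delta u_s\|_\infty+\|\Delta v_s\|_\infty\bigr)\|\nabla\phi\|_2,
\]
in which the third-order transport term $u_s\Delta\phi_X+v_s\Delta\phi_Y$ has been traded for a second-order contribution; implicitly this exploits the divergence form $u_s\Delta\phi_X+v_s\Delta\phi_Y=\partial_X(u_s\Delta\phi)+\partial_Y(v_s\Delta\phi)-(u_{sX}+v_{sY})\Delta\phi$ together with an $H^3$-from-$H^{-1}$ version of the Blum--Rannacher estimate. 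You instead go to $H^4$, interpolate, and absorb via Young, getting the slightly stronger $\|\nabla^3\phi\|_2\lesssim\varepsilon^{-7/4}\|f\|_2$; both are more than enough for the $\varepsilon^3$ weight. Two small cautions in your variant. First, the multiplicative Gagliardo--Nirenberg form $\|\nabla^3\phi\|_2^2\lesssim\|\nabla^4\phi\|_2\|\nabla^2\phi\|_2$ as written picks up boundary terms on a bounded domain; use instead the additive interpolation $\|\nabla^3\phi\|_2\le\eta\|\nabla^4\phi\|_2+C(\eta)\|\nabla^2\phi\|_2$, which gives the same conclusion after choosing $\eta\sim\varepsilon$. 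Second, the clamped-plate $H^4$ estimate $\|\nabla^4\phi\|_2\lesssim\|\Delta^2\phi\|_2$ at right-angle corners is nontrivial (it fails at larger angles); it does hold at $\omega=\pi/2$ and is covered by the very Blum--Rannacher theorem the paper cites, but you should say so rather than call it ``standard.''
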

\begin{proof}
Under the assumptions of Theorem \ref{Thm1} or \ref{Thm2}, we use either Lemmas \ref{lem_1} and \ref{lem2} or Lemmas \ref{lem_1} and \ref{lem3} to close the estimates and obtain
$$
\| u_a q_{X}\|_2^2 + \| u_a q_{Y}\|_2^2 + \varepsilon \left( \| \sqrt{u_a} q_{XX}\|_2^2 +\| \sqrt{u_a} q_{XY}\|_2^2 + \| \sqrt{u_a} q_{YY}\|_2^2 \right) \lesssim  \|f\|_2^2.
$$ 
Then by \eqref{phi_X to q_X} and \eqref{Hardy_1} with $F=q_X$
$$
\|\phi_X\|_2 \lesssim \| q_X\|_2 \lesssim \| u_a q_{X}\|_2 + \sqrt\varepsilon \| \sqrt{u_a} q_{XY}\|_2 \lesssim \|f\|_2.
$$
Similarly, it is straightforward to see that
$$
\|\phi_Y\|_2 + \sqrt{\varepsilon}\|\nabla^2 \phi\|_2 \lesssim \|f\|_2.
$$
For $H^3$ estimate, we use an a-priori estimate for bi-Laplace operator on domains with right corners (see \cite{BR80}*{Theorem 2}), \eqref{u_a_property}, \eqref{u_r_property} and \eqref{u_r_property2} to obtain
\begin{align*}
\| \nabla^3 \phi\|_2 \lesssim& \frac{1}{\varepsilon} \|f\|_2 + \frac{1}{\varepsilon} (\|u_{sX}\|_\infty + \|v_{sY}\|_\infty)\|\nabla^2 \phi\|_2 \\
&+ \frac{1}{\varepsilon} (\|\Delta u_s\|_\infty + \|\Delta v_s\|_\infty) \|\nabla \phi\|_2\\
\lesssim& \frac{1}{\varepsilon^3} \|f\|_2.
\end{align*}
\end{proof}

Now we are ready to prove Theorems \ref{Thm1} and \ref{Thm2} using the contraction mapping method.

\begin{proof}[Proof of Theorems \ref{Thm1} and \ref{Thm2}]
The goal is to show the existence of a solution $\phi$ to \eqref{remainder_eq} satisfying $\|\phi\|_{\cX} \le 1$. We define a solution map $S: \psi \mapsto \phi$, where $\phi$ is the solution to
$$
u_s \Delta \phi_X - \Delta u_s \phi_X + v_s \Delta \phi_Y - \Delta v_s \phi_Y - \varepsilon \Delta^2 \phi = -\varepsilon^{-\frac{13}{2}} R - \varepsilon^{\frac{13}{2}} (\psi_Y \Delta \psi_X - \psi_X \Delta \psi_Y)
$$
with boundary conditions \eqref{remainder_boundary_condition}, where $R$ is given in \eqref{remainder_R}. Let
$$
B:= \{ \phi \in \cX: \|\phi\|_{\cX} < 1\}.
$$
We will prove that $S$ is a contraction mapping in $B$ when $\varepsilon$ is small. From the construction of approximated profile in Section \ref{sec_app}, we know that $R = O(\varepsilon^7)$. Then by Proposition \ref{prop_estimate}, we have
$$
\|S(\psi)\|_{\cX} \lesssim \varepsilon^{-\frac{13}{2}} R + \varepsilon^{\frac{13}{2}} \| \nabla \psi\|_\infty \| \nabla^3 \psi\|_2 \lesssim \varepsilon^{1/2}.
$$
Therefore, $S$ maps $B$ into $B$ when $\varepsilon$ is small. Let $\psi_1, \psi_2 \in B$, then we have
\begin{align*}
\| S(\psi_1 - \psi_2)\|_{\cX} \lesssim& \varepsilon^{\frac{13}{2}} \|\psi_{1Y} \Delta \psi_{1X} - \psi_{1X} \Delta \psi_{1Y} - \psi_{2Y} \Delta \psi_{2X} + \psi_{2X} \Delta \psi_{2Y}\|_2\\
\lesssim& \varepsilon^{\frac{13}{2}} \Big( \|\psi_{1Y} - \psi_{2Y} \|_\infty \| \nabla^3 \psi_{1}\|_2 + \|\psi_{2Y}\|_\infty \|\Delta \psi_{1X} - \Delta \psi_{2X}\|_2 \Big)\\
\lesssim& \varepsilon^{\frac{1}{2}} \|\psi_1 - \psi_2\|_{\cX}.
\end{align*}
Hence, $S$ is a contraction mapping in $B$ when $\varepsilon$ is small. Then we can conclude, using contraction mapping theorem that, there exists a unique solution $\phi$ to \eqref{remainder_eq} with the boundary condition \eqref{remainder_boundary_condition} satisfying $\|\phi\|_{\cX} < 1$. This concludes the proof of Theorems \ref{Thm1} and \ref{Thm2}.
\end{proof}

\bibliographystyle{amsplain}
\begin{bibdiv}
\begin{biblist}

\bib{BR80}{article}{
      author={Blum, H.},
      author={Rannacher, R.},
       title={On the boundary value problem of the biharmonic operator on
  domains with angular corners},
        date={1980},
        ISSN={0170-4214,1099-1476},
     journal={Math. Methods Appl. Sci.},
      volume={2},
      number={4},
       pages={556\ndash 581},
         url={https://doi.org/10.1002/mma.1670020416},
      review={\MR{595625}},
}

\bib{GuoIyer21}{article}{
      author={Guo, Yan},
      author={Iyer, Sameer},
       title={Regularity and expansion for steady {P}randtl equations},
        date={2021},
        ISSN={0010-3616,1432-0916},
     journal={Comm. Math. Phys.},
      volume={382},
      number={3},
       pages={1403\ndash 1447},
         url={https://doi.org/10.1007/s00220-021-03964-9},
      review={\MR{4232771}},
}

\bib{GuoIyer23}{article}{
      author={Guo, Yan},
      author={Iyer, Sameer},
       title={Validity of steady {P}randtl layer expansions},
        date={2023},
        ISSN={0010-3640,1097-0312},
     journal={Comm. Pure Appl. Math.},
      volume={76},
      number={11},
       pages={3150\ndash 3232},
      review={\MR{4642817}},
}

\bib{GuoNguyen17}{article}{
      author={Guo, Yan},
      author={Nguyen, Toan~T.},
       title={Prandtl boundary layer expansions of steady {N}avier-{S}tokes
  flows over a moving plate},
        date={2017},
        ISSN={2524-5317,2199-2576},
     journal={Ann. PDE},
      volume={3},
      number={1},
       pages={Paper No. 10, 58},
         url={https://doi.org/10.1007/s40818-016-0020-6},
      review={\MR{3634071}},
}

\bib{GVM19}{article}{
      author={Gerard-Varet, David},
      author={Maekawa, Yasunori},
       title={Sobolev stability of {P}randtl expansions for the steady
  {N}avier-{S}tokes equations},
        date={2019},
        ISSN={0003-9527,1432-0673},
     journal={Arch. Ration. Mech. Anal.},
      volume={233},
      number={3},
       pages={1319\ndash 1382},
         url={https://doi.org/10.1007/s00205-019-01380-x},
      review={\MR{3961300}},
}

\bib{GaoZhang23}{article}{
      author={Gao, Chen},
      author={Zhang, Liqun},
       title={On the steady {P}randtl boundary layer expansions},
        date={2023},
        ISSN={1674-7283,1869-1862},
     journal={Sci. China Math.},
      volume={66},
      number={9},
       pages={1993\ndash 2020},
         url={https://doi.org/10.1007/s11425-022-2025-5},
      review={\MR{4629930}},
}

\bib{IyerMasmoudi21b}{article}{
      author={Iyer, Sameer},
      author={Masmoudi, Nader},
       title={Boundary layer expansions for the stationary {N}avier-{S}tokes
  equations},
        date={2021},
        ISSN={2769-8505},
     journal={Ars Inven. Anal.},
       pages={Paper No. 6, 47},
      review={\MR{4462474}},
}

\bib{IyerMasmoudi21a}{article}{
      author={Iyer, Sameer},
      author={Masmoudi, Nader},
       title={Global-in-$x$ stability of steady prandtl expansions for 2d
  navier-stokes flows},
        date={2021},
     journal={arXiv:2008.12347},
}

\bib{Iyer17}{article}{
      author={Iyer, Sameer},
       title={Steady {P}randtl boundary layer expansions over a rotating disk},
        date={2017},
        ISSN={0003-9527,1432-0673},
     journal={Arch. Ration. Mech. Anal.},
      volume={224},
      number={2},
       pages={421\ndash 469},
         url={https://doi.org/10.1007/s00205-017-1080-9},
      review={\MR{3614752}},
}

\bib{Iyer1}{article}{
      author={Iyer, Sameer},
       title={Global steady {P}randtl expansion over a moving boundary {I}},
        date={2019},
        ISSN={2096-6075,2524-7182},
     journal={Peking Math. J.},
      volume={2},
      number={2},
       pages={155\ndash 238},
         url={https://doi.org/10.1007/s42543-019-00011-4},
      review={\MR{4060002}},
}

\bib{Iyer2}{article}{
      author={Iyer, Sameer},
       title={Global steady {P}randtl expansion over a moving boundary {II}},
        date={2019},
        ISSN={2096-6075,2524-7182},
     journal={Peking Math. J.},
      volume={2},
      number={3-4},
       pages={353\ndash 437},
         url={https://doi.org/10.1007/s42543-019-00014-1},
      review={\MR{4060005}},
}

\bib{Iyer19}{article}{
      author={Iyer, Sameer},
       title={Steady {P}randtl layers over a moving boundary: nonshear {E}uler
  flows},
        date={2019},
        ISSN={0036-1410,1095-7154},
     journal={SIAM J. Math. Anal.},
      volume={51},
      number={3},
       pages={1657\ndash 1695},
         url={https://doi.org/10.1137/18M1207351},
      review={\MR{3945803}},
}

\bib{Iyer3}{article}{
      author={Iyer, Sameer},
       title={Global steady {P}randtl expansion over a moving boundary {III}},
        date={2020},
        ISSN={2096-6075,2524-7182},
     journal={Peking Math. J.},
      volume={3},
      number={1},
       pages={47\ndash 102},
         url={https://doi.org/10.1007/s42543-019-00015-0},
      review={\MR{4081398}},
}

\bib{Iyer20}{article}{
      author={Iyer, Sameer},
       title={On global-in-{$x$} stability of {B}lasius profiles},
        date={2020},
        ISSN={0003-9527,1432-0673},
     journal={Arch. Ration. Mech. Anal.},
      volume={237},
      number={2},
       pages={951\ndash 998},
         url={https://doi.org/10.1007/s00205-020-01523-5},
      review={\MR{4097332}},
}

\bib{Oleinik}{book}{
      author={Oleinik, O.~A.},
      author={Samokhin, V.~N.},
       title={Mathematical models in boundary layer theory},
      series={Applied Mathematics and Mathematical Computation},
   publisher={Chapman \& Hall/CRC, Boca Raton, FL},
        date={1999},
      volume={15},
        ISBN={1-58488-015-5},
      review={\MR{1697762}},
}

\bib{Serrin67}{article}{
      author={Serrin, J.},
       title={Asymptotic behavior of velocity profiles in the {P}randtl
  boundary layer theory},
        date={1967},
        ISSN={0962-8444,2053-9169},
     journal={Proc. Roy. Soc. London Ser. A},
      volume={299},
       pages={491\ndash 507},
         url={https://doi.org/10.1098/rspa.1967.0151},
      review={\MR{282585}},
}

\bib{TemamWang}{article}{
      author={Temam, R.},
      author={Wang, X.},
       title={Boundary layers associated with incompressible {N}avier-{S}tokes
  equations: the noncharacteristic boundary case},
        date={2002},
        ISSN={0022-0396,1090-2732},
     journal={J. Differential Equations},
      volume={179},
      number={2},
       pages={647\ndash 686},
         url={https://doi.org/10.1006/jdeq.2001.4038},
      review={\MR{1885683}},
}

\bib{WangZhang21}{article}{
      author={Wang, Yue},
      author={Zhang, Zhifei},
       title={Global {$C^\infty$} regularity of the steady {P}randtl equation
  with favorable pressure gradient},
        date={2021},
        ISSN={0294-1449,1873-1430},
     journal={Ann. Inst. H. Poincar\'e{} C Anal. Non Lin\'eaire},
      volume={38},
      number={6},
       pages={1989\ndash 2004},
         url={https://doi.org/10.1016/j.anihpc.2021.02.007},
      review={\MR{4327905}},
}

\end{biblist}
\end{bibdiv}

\end{document}